\DeclareSymbolFont{bbold}{U}{bbold}{m}{n}
\DeclareSymbolFontAlphabet{\mathbbold}{bbold}
\DeclareFontFamily{U}{min}{}
\DeclareFontShape{U}{min}{m}{n}{<-> udmj30}{}
\newtheoremstyle{theoremdd}
{\topsep}
{\topsep}
{\itshape}
{0pt}
{\bfseries}
{.}
{ }
{\thmname{#1}\thmnumber{ #2}\textnormal{\thmnote{ (#3)}}}
\theoremstyle{theoremdd}
\newtheorem{theorem}{Theorem}[section]
\newtheorem{definition}[theorem]{Definition}
\newtheorem{proposition}[theorem]{Proposition}
\newtheorem{corollary}[theorem]{Corollary}
\newtheorem{lemma}[theorem]{Lemma}
\def\@endtheorem{\qed\endtrivlist\@endpefalse}
\newtheoremstyle{theoremddnoproof}
{\topsep}
{\topsep}
{\itshape}
{0pt}
{\bfseries}
{.}
{ }
{\thmname{#1}\thmnumber{ #2}\textnormal{\thmnote{ (#3)}}}
\theoremstyle{theoremddnoproof}
\newtheorem{theoremnoproof}[theorem]{Theorem}
\newtheorem{corollarynoproof}[theorem]{Corollary}
\newtheorem{lemmanoproof}[theorem]{Lemma}
\newcommand{\KM}{\ensuremath{\mathrm{K}^M}}
\newcommand{\KMW}{\ensuremath{\mathrm{K}^{MW}}}
\newcommand{\KW}{\ensuremath{\mathrm{K}^W}}
\newcommand{\GW}{\ensuremath{\mathrm{GW}}}
\title{Milnor-Witt K-theory and Witt K-theory of a field}
\author{Robin Carlier\footnote{The author is supported
		by the ANR project ``HQDIAG'', ANR-21-CE40-0015.}}
\date{\today}
\begin{document}

\maketitle
\begin{abstract}
	We recall some basic computations in the Milnor-Witt K-theory of a field, following Morel. We then focus on the Witt K-theory of a field of characteristic two and give an elementary proof of the fact that it is isomorphic as a graded ring to the Rees algebra of the fundamental ideal of the Witt ring of symmetric bilinear forms using Kato's solution to Milnor's conjecture on quadratic form.
\end{abstract}
\section{Introduction}
These notes are intended as a companion to~\cite{deglise_KMW_2023}.
While the relevance of Milnor-Witt K-theory to $\mathbb{A}^1$-homotopy theory is explained there, we take a much more elementary approach here and focus on the case of fields.
In the first section, we recall the basics of Milnor-Witt K-theory following the definition of Hopkins and Morel~\cite[Def. 5.1]{morel_puissances2004} and recall some basics computations in the ring $\KMW_*(F)$.
The main result in this first section is Theorem~\ref{prop_iso}, which fully describes the negative part of Milnor-Witt K-theory as well as Corollary~\ref{cor_loc}, which is a direct consequence of Theorem~\ref{prop_iso} and which describes the $\eta$-periodic structure of Milnor-Witt K-Theory.
Our exposition in the first section follows extremely closely section 1 of chapter 3 of~\cite{morel_2012}, and we merely gives a few more details in some proofs for the convenience of the reader.

In a second part, we focus on Witt K-theory, which is the quotient of Milnor-Witt K-theory by the hyperbolic element $h$, mirroring the definition of the Witt Ring of $F$ as the quotient of the Grothendieck-Witt ring of $F$ by the element representing the hyperbolic inner product space.
We mainly focus on the characteristic 2 situation.
The main result of this second part is Theorem~\ref{KW_iso_I} asserting that the Witt $K$-theory of $F$ is in fact isomorphic to the $\mathbb{Z}$-graded ring associated to the $\mathrm{I}(F)$-adic filtration of the Witt ring $\mathrm{W}(F)$.
The theorem also holds in the case of an arbitrary field as stated in Theorem 2.3.5 of~\cite{deglise_KMW_2023} and a proof in characteristic $\neq 2$ case appears as the main result of~\cite{morel_puissances2004} and here we merely check that the proof strategy from there can be adapted.
In fact, the characteristic 2 case happens to be much more elementary than the general case, the main input being Kato's proof of Milnor's conjecture on $\KM_*(F)/2\KM_*(F)$ in characateristic $2$~\cite{kato_symmetric_1982}, which is rather elementary compared to the general case.
Validity of~\cite[Thm. 2.3.5]{deglise_KMW_2023} in characteristic 2 has been claimed in various places (\cite[Remark 3.12]{morel_2012}, for instance) and is surely known by the experts.
As explained in~\cite[Cor. 2.3.7]{deglise_KMW_2023}, a consequence of the isomorphism of Theorem~\ref{KW_iso_I} is the existence of a cartesian square
\[\begin{tikzcd}
		\KMW_*(F) & \KM_*(F) \\
		\mathrm{I}^*(F) & i^*(F)
		\arrow["{}", from=1-1, to=1-2]
		\arrow["{}"', from=1-1, to=2-1]
		\arrow["{}", from=1-2, to=2-2]
		\arrow["{}"', from=2-1, to=2-2]
		\arrow["\ulcorner"{anchor=center, pos=0.125}, draw=none, from=1-1, to=2-2]
	\end{tikzcd}\]
where $i^*(F) = \bigoplus\limits_{n}\mathrm{I}^n(F)/\mathrm{I}^{n+1}(F)$ and $\mathrm{I}^{n}(F)$ is the $n$-th power of the fundamental ideal $\mathrm{I}(F)$ if $n > 0$ and $\mathrm{W}(F)$ if $n \leq 0$.

These notes grew out of notes for a seminar on Morel's book~\cite{morel_2012}.
We wish to thank Frédéric Déglise for prompting us to write them and for suggesting to work out the characteristic 2 case.
The results and computations described in these notes are currently being formalized using the Lean proof assistant, and the project should soon appear on the author's personal webpage.
\section{Basics of Milnor-Witt K-theory of fields, after Morel}
\begin{definition}
	Let $F$ be a field. The {\normalfont{Milnor-Witt $K$-theory of $F$}}, denoted by $\KMW_*(F)$, is the associative graded ring presented by generators and relations in the following way:\begin{itemize}
		\item A set of generator is given by the set of symbols $\left\{[u],\, u \in F^\times\right\} \cup \{\eta\}$.
		      The symbol $\eta$ is of degree $-1$, and for $u \in F^\times$ the symbol $[u]$ is of degree $1$.
		\item The generators are subject to the following set of relations: \begin{enumerate}[label={\normalfont{$\mathbf{KMW\arabic*}$}}]
			      \item\label{rel1} For all $a \in F^{\times}$, $a \neq 1$, $[a][1-a] = 0$.
			      \item\label{rel2} For all $a, b \in F^{\times}$, $[ab] = [a] + [b] + \eta \cdot [a] [b]$.
			      \item\label{rel3} For all $u \in F^\times$, $\eta[u] = [u]\eta$.
			      \item\label{rel4} Let $h \colonequals \eta[-1] + 2$. Then $\eta h = 0$.
		      \end{enumerate}
	\end{itemize}
\end{definition}
Relation~\ref{rel1} is usually called the \textit{Steinberg relation}.
It is immediate from the definition that the quotient $\KMW_*(F)/\eta$ is isomorphic to the Milnor $K$-theory $\KM_*(F)$ of the field $F$.
One defines the Witt $K$-theory of $F$, denoted by $\KW_*(F)$, as the quotient $\KMW_*(F)/h$.
It follows immediately from relation~\ref{rel3} that $\eta$ is a central element in $\KMW_*(F)$, so that one gets the following easy presentation of the $n$-the graded piece of $\KMW_*(F)$.
\begin{lemmanoproof}\label{lemma_generators_big}
	Let $F$ be a field and let $n$ be an integer. Let $\widetilde{\mathrm{K}}^{MW}_n(F)$ be the abelian group presented by generators and relations in the following way: \begin{itemize}
		\item A set of generators is the set of symbols $\left[\eta^m, u_1,\ldots, u_r\right]$ where $r$ is an integer, $m = r - n \geq 0$, and $u_1,\ldots,u_r$ are elements of $F^\times$.
		\item These generators satisfy the following set of relations: \begin{itemize}
			      \item $\left[\eta^m, u_1,\ldots, u_r\right] = 0$ if there exists $i$ such that $1 \leq i < r$ and $u_i + u_{i+1} = 0$.
			      \item For all integers $r$, for all pairs $a,b \in F^\times$ and for all $1 < i < r $,\begin{align*}
				            \left[\eta^m,\ldots,u_{i-1},ab,u_{i+1},\ldots,u_r\right] & = \left[\eta^m,\ldots,u_{i-1},a,u_{i+1},\ldots,u_r\right]                      \\
				                                                                     & \qquad \qquad + \left[\eta^m,\ldots,u_{i-1},b,u_{i+1},\ldots,u_r\right]        \\
				                                                                     & \qquad \qquad + \left[\eta^{m+1},\ldots,u_{i-1},a,b,u_{i+1},\ldots,u_r\right].
			            \end{align*}
			      \item For all integers $i$, \begin{align*}
				            \left[\eta^{m},u_1,\ldots,u_{i-1},-1,u_{i+1},\ldots,u_r\right] + 2\left[\eta^{m-1},u_1,\ldots,u_{i-1},u_{i+1},\ldots,u_r\right] = 0.
			            \end{align*}
		      \end{itemize}
	\end{itemize}
	The canonical morphism $\widetilde{\mathrm{K}}^{MW}_n(F) \to \KMW_n(F)$ that sends $[\eta^m, u_1,\ldots, u_r]$ to $\eta^m[u_1]\cdots[u_r]$ is an isomorphism.
\end{lemmanoproof}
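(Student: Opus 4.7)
The plan is to construct an inverse to the canonical morphism by equipping $\widetilde{\mathrm{K}}^{MW}_*(F) := \bigoplus_n \widetilde{\mathrm{K}}^{MW}_n(F)$ with a graded ring structure and invoking the universal property of the presentation of $\KMW_*(F)$.

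First I would verify that the canonical map is well-defined by checking that each of the three listed relations of $\widetilde{\mathrm{K}}^{MW}_n(F)$ holds in $\KMW_n(F)$. The multiplicativity relation translates into \ref{rel2} once the central factor $\eta^m$ is pushed past the $[u_i]$'s using \ref{rel3}; the $(-1)$-relation is an application of \ref{rel4} after using \ref{rel3} to bring one factor of $\eta$ adjacent to the $[-1]$-entry; the Steinberg-type relation reduces to the identity $[a][-a] = 0$ in $\KMW_*(F)$, which is a standard consequence of \ref{rel1} and \ref{rel2} derived via $-a = (1-a)(1-a^{-1})^{-1}$ for $a \neq 1$, together with the inversion formula $[x^{-1}] = -\langle x \rangle^{-1}[x]$, where $\langle x \rangle := 1 + \eta[x]$. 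Surjectivity is then immediate since $\KMW_n(F)$ is $\mathbb{Z}$-spanned by monomials in $\eta$ and symbols $[u]$ of total degree $n$, and \ref{rel3} allows one to collect all $\eta$'s at the front.

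For injectivity, I would endow $\widetilde{\mathrm{K}}^{MW}_*(F)$ with a graded multiplication defined on generators by concatenation,
\[
[\eta^m, u_1,\ldots,u_r] \cdot [\eta^{m'}, v_1,\ldots,v_s] := [\eta^{m+m'}, u_1,\ldots,u_r, v_1,\ldots,v_s].
\]
Well-definedness of this product is immediate because each defining relation involves only a contiguous block of letters and therefore survives concatenation on either side. It then suffices to check that the generators $[\eta^1] \in \widetilde{\mathrm{K}}^{MW}_{-1}(F)$ and $[u] \in \widetilde{\mathrm{K}}^{MW}_1(F)$ satisfy \ref{rel1}--\ref{rel4} in this graded ring, which by the universal property of $\KMW_*(F)$ produces a graded ring map $\KMW_*(F) \to \widetilde{\mathrm{K}}^{MW}_*(F)$ visibly inverse to the canonical map on generators.

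The main obstacle will be the verification of \ref{rel1}, namely that $[a][1-a] = 0$ holds in $\widetilde{\mathrm{K}}^{MW}_*(F)$: this has to be extracted from the weaker Steinberg-type relation $[a][-a] = 0$ and the bilinearity relations via the identity $1-a = -a(1-a^{-1})$ together with a careful bookkeeping of the $\eta$-corrections. In contrast, \ref{rel2}, \ref{rel3}, and \ref{rel4} are essentially built into the presentation of $\widetilde{\mathrm{K}}^{MW}_*(F)$ and follow directly from the concatenation structure.
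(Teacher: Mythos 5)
Your overall architecture is the right one (and, since the paper states this lemma without proof, it is presumably the intended one): check the three relations in $\KMW_n(F)$ to get the canonical map, observe surjectivity from centrality of $\eta$, and build the inverse by putting the concatenation product on $\bigoplus_n\widetilde{\mathrm{K}}^{MW}_n(F)$ and invoking the universal property of $\KMW_*(F)$. The well-definedness and surjectivity parts are fine.

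The gap is exactly at the step you defer as ``the main obstacle'', and it is not an obstacle that can be overcome: the Steinberg relation \ref{rel1} cannot be extracted from the relation $[\ldots,u_i,u_{i+1},\ldots]=0$ for $u_i+u_{i+1}=0$ together with bilinearity. The manipulation you propose via $1-a=-a\left(1-a^{-1}\right)$ only yields
\[
[a][1-a]=[a][-a]+\langle -a\rangle[a]\left[1-a^{-1}\right]=-\left\langle -a^2\right\rangle\left[a^{-1}\right]\left[1-a^{-1}\right]
\]
once $[a][-a]=0$ is imposed; this is the computation of Lemma~\ref{lemma_2}\ref{lemma_2_1} run backwards, and it merely relates the Steinberg element of $a$ to that of $a^{-1}$ up to a unit, never producing $0$ unless one of them is already known to vanish. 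The implication is genuinely false: killing $\eta$ reduces the question to whether $\{a\}\otimes\{-a\}=0$ and bilinearity force $\{a\}\otimes\{1-a\}=0$ in a quotient of $F^\times\otimes F^\times$, and already for $F=\mathbb{F}_3$ they do not ($F^\times\otimes F^\times\cong\mathbb{Z}/2$ is generated by $(-1)\otimes(-1)$, every element $a\otimes(-a)$ has a factor equal to $1$ and is trivially zero, yet the Steinberg element $\{2\}\otimes\{1-2\}=(-1)\otimes(-1)$ is the generator, which must die since $\KM_2(\mathbb{F}_3)=0$). The resolution is that the first relation in the statement must be read as $u_i+u_{i+1}=1$, i.e.\ $u_{i+1}=1-u_i$ (this is Morel's Lemma 3.4); with that reading \ref{rel1} holds in $\widetilde{\mathrm{K}}^{MW}_*(F)$ essentially by definition and your construction of the inverse goes through, while the well-definedness direction now just uses \ref{rel1} itself rather than $[a][-a]=0$. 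While repairing the statement you should also read ``$1<i<r$'' as $1\le i\le r$ (otherwise bilinearity is never imposed for $r=1$ and injectivity is hopeless), and restrict the third relation to $m\ge 2$: for $m=1$, $r=1$, $u_1=-1$ it would assert $h=0$ in $\KMW_0(F)$, so even the canonical map would fail to be well-defined --- your appeal to \ref{rel4} silently uses the spare factor of $\eta$ that exists only when $m-1\ge 1$.
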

For an element $a \in F^\times$, we set $\langle a \rangle \colonequals 1 + \eta[a] \in \KMW_0(F)$.
We now give a few relations that hold in the ring $\KMW_*(F)$. The proofs are extracted from~\cite{morel_2012}, chapter 3.1 and are reproduced here for convenience.
\begin{lemma}\label{lemma_1}
	Let $a,b \in F^\times$.
	\begin{enumerate}[label={\normalfont{({\alph*})}}, ref={\normalfont{(\alph*)}}]
		\item\label{lemma_1_1} $[ab] = [a] + \langle a\rangle[b]$ and $[ab] = [a]\langle b\rangle + [b]$.
		\item\label{lemma_1_2} $\langle ab\rangle = \langle a\rangle.\langle b\rangle$ and $\langle a \rangle $ is central in $\KMW_*(F)$.
		\item\label{lemma_1_3} $\langle1_{F}\rangle = 1_{\KMW_*(F)}$ and $[1] = 0$.
		\item\label{lemma_1_4} $\langle a\rangle$ is a unit of $\KMW_*(F)$ whose inverse is $\langle a^{-1}\rangle$.
		\item\label{lemma_1_5} $\left[\frac{a}{b}\right] = [a] - \left\langle\frac{a}{b}\right\rangle[b]$. In particular $\left[a^{-1}\right] = -\left\langle a^{-1} \right\rangle[a]$.
	\end{enumerate}
\end{lemma}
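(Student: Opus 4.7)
The five parts are naturally proved in the given order, with genuine interdependencies: (a) and (b) rest directly on KMW2 and KMW3, part (c) is the main obstacle and requires KMW4 to break a circularity, and parts (d) and (e) are short consequences.

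For (a), the plan is to rewrite KMW2 as $[ab] = [a] + (1 + \eta[a])[b]$, yielding $[ab] = [a] + \langle a\rangle[b]$ directly; moving $\eta$ past $[a]$ via KMW3 then produces the symmetric form $[ab] = [a]\langle b\rangle + [b]$. For (b), checking centrality of $\langle a\rangle = 1 + \eta[a]$ reduces to proving $\eta[a][u] = \eta[u][a]$ for every $u \in F^\times$; I would obtain this by computing the two expressions for $[au] = [ua]$ from KMW2 and comparing, with commutativity in $F^\times$ giving the result. Commutation of $\langle a\rangle$ with $\eta$ itself is immediate from KMW3. Multiplicativity then follows from a short expansion
\[\langle a\rangle\langle b\rangle = 1 + \eta[a] + \eta[b] + \eta^2[a][b] = 1 + \eta\bigl([a] + [b] + \eta[a][b]\bigr) = 1 + \eta[ab] = \langle ab\rangle.\]

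The main obstacle is (c): setting $a = b = 1$ in KMW2 only produces $[1]\langle 1\rangle = 0$, which is circular in the absence of any invertibility statement for $\langle 1\rangle$. The plan is to first establish $\eta[1] = 0$, equivalently $\langle 1\rangle = 1$, and only then deduce $[1] = 0$. I would apply KMW2 to the factorization $1 = (-1)(-1)$, obtaining
\[[1] = 2[-1] + \eta[-1]^2 = [-1]\bigl(\eta[-1] + 2\bigr) = [-1]\cdot h,\]
where the middle equality uses centrality of $\eta$ from KMW3. Multiplying by $\eta$ on the left and invoking KMW4 then gives $\eta[1] = [-1]\,\eta h = 0$, so $\langle 1\rangle = 1$. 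Specializing part (a) to $a = 1$ now yields $[b] = [1] + \langle 1\rangle[b] = [1] + [b]$, hence $[1] = 0$.

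With (a)--(c) in hand, the last two parts are immediate. Part (d) follows from (b) and (c) via $\langle a\rangle\langle a^{-1}\rangle = \langle aa^{-1}\rangle = \langle 1\rangle = 1$. Part (e) follows from (a) applied to the factorization $a = (a/b)\cdot b$, giving $[a] = [a/b] + \langle a/b\rangle[b]$ which rearranges to the desired identity; specializing $a = 1$ and using $[1] = 0$ yields the formula for $[a^{-1}]$.
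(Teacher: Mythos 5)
Your proposal is correct and follows essentially the same route as the paper: parts (a), (b), (d), and (e) are handled identically, and the key step in (c) is the same in substance, namely using~\ref{rel4} to first establish $\eta[1]=0$ before deducing $[1]=0$ from part (a). The only cosmetic difference is that the paper obtains $\eta[1]=0$ by simplifying $(\langle 1\rangle - 1)(\langle -1\rangle + 1)=\eta[1]\,h=0$, whereas you factor $[1]=[-1]\,h$ directly and multiply by $\eta$.
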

\begin{proof}
	One has \begin{align*}
		[ab] & = [a] + [b] + \eta[a][b]       \\
		     & = [a] + (1 + \eta[a])[b]       \\
		     & = [a] + \langle a \rangle [b],
	\end{align*}
	with the first equality being relation~\ref{rel2}, and the last being the definition of $\langle a \rangle$.
	Since $[ab] = [ba]$, one also gets $[ab] = [a] + [b]\langle a\rangle$ and $[ab] = [b] + \langle b \rangle[a]$, which proves the first item.

	For the proof of~\ref{lemma_1_2}, using again relation~\ref{rel2}, we have \begin{align*}
		\langle ab \rangle & = 1 + \eta[ab]                                \\
		                   & = 1 + \eta\left([a] + [b] + \eta[a][b]\right) \\
		                   & = 1 + \eta[a] + \eta[b] + \eta^2[a][b]        \\
		                   & = (1 + \eta[a])(1 + \eta[b])                  \\
		                   & = \langle a \rangle\langle b \rangle.
	\end{align*}
	Using property~\ref{lemma_1_1}, this gives $\langle a \rangle [b] = [b] \langle a \rangle$ for all $b$, thus $\langle a \rangle$ is central as claimed.

	For the third item, notice that $h = \langle -1 \rangle + 1$ and that $\eta[1] = \langle 1 \rangle - 1$. Thus, relation~\ref{rel4} implies
	\[(\langle 1 \rangle - 1)(\langle -1 \rangle + 1) = 0,\]
	and since
	\[ (\langle 1 \rangle - 1)(\langle -1 \rangle + 1) = \langle 1 \rangle \langle -1 \rangle + \langle 1 \rangle - \langle -1 \rangle - 1, \]
	we have $\langle 1 \rangle \langle -1 \rangle = \langle -1 \rangle$ by~\ref{lemma_1_2}, so that
	\[(\langle 1 \rangle - 1)(\langle -1 \rangle + 1) = \langle 1 \rangle - 1.\]
	Thus the fact that the right-hand side is zero implies the desired relation. Now by part~\ref{lemma_1_1}, \begin{align*}
		[1] & = [1] + \langle 1 \rangle [1] \\
		    & = [1] + [1]
	\end{align*}
	so that $[1] = 0$ as desired.\\
	For the fourth point, by part~\ref{lemma_1_3} and~\ref{lemma_1_2} we get \begin{align*}
		1 & = \langle 1 \rangle                         \\
		  & = \langle a a^{-1}\rangle                   \\
		  & = \langle a \rangle \langle a^{-1} \rangle,
	\end{align*}
	and similarly $\langle a^{-1} \rangle \langle a \rangle = 1$, which is~\ref{lemma_1_4}.

	Finally, one has
	\begin{align*}
		[a] & = \left[ab^{-1}b\right]                                              \\
		    & = \left[\frac{a}{b}\right] + \left\langle\frac{a}{b}\right\rangle[b]
	\end{align*}
	which proves the equality $\left[\frac{a}{b}\right] = [a] - \left\langle\frac{a}{b}\right\rangle[b]$. Setting $a = 1$ in this equality and using that $[1] = 0$, we get the expected particular case.
\end{proof}
\begin{lemma}\label{lemma_generators}
	\begin{enumerate}[label={\normalfont{(\arabic*)}}]
		\item For $n \geq 1$, the abelian group $\KMW_n(F)$ is generated by elements of the form $[u_1]\cdots[u_n]$.
		\item For $n \leq 0$, the abelian group $\KMW_n(F)$ is generated by elements of the form $\eta^{n}\langle u \rangle$.
	\end{enumerate}
\end{lemma}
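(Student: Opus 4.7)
The strategy is to combine the explicit presentation of $\KMW_n(F)$ given by Lemma~\ref{lemma_generators_big}, which exhibits this group as generated by symbols $\eta^m[u_1]\cdots[u_r]$ with $m = r - n \geq 0$, with a single reduction trick. Rearranging relation~\ref{rel2} yields
\[ \eta[a][b] = [ab] - [a] - [b], \]
so, together with the centrality of $\eta$ from relation~\ref{rel3}, any occurrence of $\eta[u_i][u_{i+1}]$ inside a longer word may be replaced by a $\mathbb{Z}$-linear combination of three shorter words. Concretely, whenever $r' \geq 2$ and $m' \geq 1$, a generator $\eta^{m'}[v_1]\cdots[v_{r'}]$ expands as a sum of three terms, each of shape $\eta^{m'-1}[w_1]\cdots[w_{r'-1}]$. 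Both the length and the $\eta$-exponent drop by one, so the invariant $r' - m' = n$ is preserved.

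For part (1), fix $n \geq 1$ and consider a generator $\eta^m[u_1]\cdots[u_r]$ with $m = r - n$. If $m = 0$ the element is already of the required form. Otherwise, at each intermediate stage the current length satisfies $r' = n + m' \geq n + 1 \geq 2$, so the reduction move is always available. Applying it $m$ times in succession yields a $\mathbb{Z}$-linear combination of pure products $[v_1]\cdots[v_n]$.

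For part (2), fix $n \leq 0$ and consider a generator $\eta^m[u_1]\cdots[u_r]$ with $m = r - n$. In the edge case $r = 0$, the generator is simply $\eta^{-n} = \eta^{-n}\langle 1_F\rangle$, using $\langle 1_F\rangle = 1$ from Lemma~\ref{lemma_1}\ref{lemma_1_3}. Otherwise, applying the reduction $r - 1$ times brings us to a $\mathbb{Z}$-linear combination of expressions $\eta^{1-n}[u]$ for various $u \in F^\times$, and then the very definition $\langle u \rangle = 1 + \eta[u]$ rewrites each such expression as
\[ \eta^{1-n}[u] = \eta^{-n}\bigl(\langle u \rangle - 1\bigr) = \eta^{-n}\langle u \rangle - \eta^{-n}\langle 1_F\rangle, \]
which is in the desired form (with the convention that the $\eta^n\langle u\rangle$ appearing in the statement means $\eta^{-n}\langle u \rangle$, the only element of that shape having degree $n$).

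There is no genuine obstacle in this argument: the proof is purely formal once Lemma~\ref{lemma_generators_big} is granted, and reduces to careful bookkeeping on the length $r$ and the $\eta$-exponent $m$ while iteratively applying the identity $\eta[a][b] = [ab] - [a] - [b]$.
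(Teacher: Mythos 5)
Your proof is correct and follows essentially the same route as the paper: both parts rest on the presentation of Lemma~\ref{lemma_generators_big} together with the rearranged relation~\ref{rel2}, namely $\eta[a][b] = [ab] - [a] - [b]$, and your bookkeeping on the length and the $\eta$-exponent (including the $r = 0$ edge case and the sign convention in $\eta^{n}\langle u\rangle$) is accurate. The only cosmetic difference is in part (2), where the paper instead writes $\eta^{r}[u_1]\cdots[u_r]$ as $\prod_i(\langle u_i\rangle - \langle 1\rangle)$ and expands via $\langle ab\rangle = \langle a\rangle\langle b\rangle$, whereas you iterate the same word-shortening move as in part (1) down to length one before converting --- an equivalent computation organized differently.
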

\begin{proof}
	Assume first that $n \geq 1$. By Lemma~\ref{lemma_generators_big} we already know that $\KMW_n(F)$ is generated by elements of the form $\eta^m[u_1]\cdots[u_r]$ with $n = r-m$.
	Relation~\ref{rel2} can be rephrased as $\eta[a][b] = [ab] - [a] - [b]$, thus, we can inductively decrease $m$ until $m = 0$, leaving only terms of the desired form.
	In the case $n \leq 0$, the group $\KMW_n(F)$ is again generated by elements of the form $\eta^m[u_1]\cdots[u_r]$ with $n = m-r$. Notice first that \begin{align*}
		\eta^m[u_1]\cdots[u_r] = \eta^{n}\eta^{r}[u_1]\cdots[u_r].
	\end{align*}
	By part~\ref{lemma_1_3} of the previous lemma, \begin{align*}
		\eta[a] & = \langle a \rangle - 1                 \\
		        & = \langle a \rangle - \langle 1 \rangle
	\end{align*}
	and this relation as well as relation~\ref{lemma_1_2} of the previous lemma ensure that $\eta^{r}[u_1]\cdots[u_r]$ reduces to a sum of elements of the form $\langle u \rangle$.
\end{proof}
Let $\epsilon$ denotes the element $-\langle-1\rangle$ of $\KMW_0(F)$. Relation~\ref{rel4} can be rephrased as $\epsilon.\eta = \eta$. The following is again extracted from~\cite{morel_2012}, 3.1.
\begin{lemma}\label{lemma_2}
	Let $a, b \in F^\times$, the following relations hold in $\KMW_*(F)$: \begin{enumerate}[label={\normalfont{({\alph*})}}, ref={\normalfont{(\alph*)}}]
		\item\label{lemma_2_1} $[a][-a] = 0$,
		\item\label{lemma_2_2} \begin{align*}
			[a][a] & = [a][-1]          \\
			       & = \epsilon[a][-1]  \\
			       & = [-1][a]          \\
			       & = \epsilon[-1][a],
		\end{align*}
		\item\label{lemma_2_3} $\langle a^2 \rangle = 1$,
		\item\label{lemma_2_4} $\langle a \rangle + \langle -a \rangle = h$,
		\item\label{lemma_2_5} $[a][b]=\epsilon[b][a]$.
	\end{enumerate}
\end{lemma}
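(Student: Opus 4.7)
My plan is to establish the five parts in order, with each building on the preceding ones and the relations~\ref{rel1}--\ref{rel4} together with Lemma~\ref{lemma_1}.

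For (a), the key trick is the algebraic identity $-a = (1-a)/(1-a^{-1})$, valid when $a \in F^\times \setminus \{1\}$ (the case $a = 1$ is trivial, since $[1] = 0$ by Lemma~\ref{lemma_1}\ref{lemma_1_3}). Applying Lemma~\ref{lemma_1}\ref{lemma_1_5} gives $[-a] = [1-a] - \langle -a\rangle[1-a^{-1}]$. Multiplying by $[a]$ on the left, the first term vanishes by Steinberg~\ref{rel1}; for the second, rewrite $[a] = -\langle a\rangle[a^{-1}]$ (again Lemma~\ref{lemma_1}\ref{lemma_1_5}) and apply Steinberg to $[a^{-1}][1-a^{-1}]$ to conclude that $[a][1-a^{-1}] = 0$.

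For (b), the plan is to exploit (a) by expanding $[-a]$ via Lemma~\ref{lemma_1}\ref{lemma_1_1} as $[-a] = [-1] + \langle-1\rangle[a]$: the equation $[a][-a] = 0$ becomes $[a][-1] + \langle-1\rangle[a][a] = 0$, and solving for $[a][a]$ after noting that $\langle-1\rangle^{-1} = \langle-1\rangle$ (a consequence of Lemma~\ref{lemma_1}\ref{lemma_1_2} and~\ref{lemma_1_3}) yields $[a][a] = -\langle-1\rangle[a][-1] = \epsilon[a][-1]$. Applying (a) with $a$ replaced by $-a$ gives $[-a][a] = 0$, and the analogous expansion yields $[-1][a] = \epsilon[a][a]$; chaining these identities produces all the listed equalities.

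For (c) and (d), one carries out a direct computation using~\ref{rel2} and the identity $\eta^2[-1] = -2\eta$ obtained from~\ref{rel4}. For (c), one writes $\langle a^2\rangle - 1 = \eta[a^2] = 2\eta[a] + \eta^2[a][a]$; by (b) together with the identity $\epsilon\eta = \eta$ (again from~\ref{rel4}), one has $\eta[a][a] = \eta[a][-1]$, hence $\eta^2[a][a] = [a]\eta^2[-1] = -2\eta[a]$, so the two contributions cancel. For (d), expanding $\eta([a]+[-a])$ via~\ref{rel2} and using $\eta^2[-1] = -2\eta$ collapses the $\eta[a]$-terms and leaves $\eta([a]+[-a]) = \langle-1\rangle - 1$, whence $\langle a\rangle + \langle-a\rangle = 1 + \langle-1\rangle = h$.

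The main obstacle will be (e), the $\epsilon$-antisymmetry $[a][b] = \epsilon[b][a]$. The cleanest approach is to apply (b) to the product $ab$: one has $[ab][ab] = \epsilon[ab][-1]$, and by expanding both sides using~\ref{rel2} ($[ab] = [a] + [b] + \eta[a][b]$), substituting $[a]^2 = \epsilon[a][-1]$ and $[b]^2 = \epsilon[b][-1]$ from (b), and systematically rewriting the triple and quadruple $\eta$-terms via centrality of $\eta$ and $\langle\cdot\rangle$, the identity $\eta[c] = \langle c\rangle - 1$, the commutativity $[-1][x] = [x][-1]$ just established in (b), and $\eta h = 0$, one is left with a relation in the cross terms that reduces to the desired antisymmetry. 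The delicate point is the bookkeeping of the many mixed terms, which must conspire to leave precisely $[a][b] - \epsilon[b][a] = 0$.
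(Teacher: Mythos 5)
Parts (a), (c) and (d) are essentially fine: (a) is the paper's argument verbatim, and your computations for (c) and (d) (using $\eta^2[-1] = -2\eta$ rather than factoring out $h$ and invoking $\eta h = 0$ directly) do check out. In (b) there is a small but real gap: from $[a][a] = \epsilon[a][-1]$ and $[-1][a] = \epsilon[a][a]$ alone, chaining only gives $[a][-1] = [-1][a]$ and the two $\epsilon$-decorated equalities; the equality $[a][a] = [a][-1]$ \emph{without} the $\epsilon$ does not follow, since it amounts to $\epsilon[a][-1] = [a][-1]$. You need the additional observation that $\epsilon[-1] = [-1]$, which comes from $0 = [1] = \left[{(-1)}^2\right] = [-1] + \langle -1\rangle[-1]$; the paper's proof includes exactly this step and you should too.

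The serious issue is (e). Starting from $[ab][ab] = \epsilon[ab][-1]$ and expanding with~\ref{rel2} produces cubic and quartic symbol products ($\eta[a][b][a]$, $\eta^2[a][b][a][b]$, and so on). These can indeed all be reduced to quadratic terms by sliding the central $\eta$'s onto adjacent symbols via $\eta[x] = \langle x\rangle - 1$, but if you actually do the bookkeeping you do not land on $[a][b] - \epsilon[b][a] = 0$: with the natural reductions the identity collapses to $\langle a\rangle\bigl(\langle b\rangle[a][b] + [b][a]\bigr) = 0$, i.e.\ $[b][a] = -\langle b\rangle[a][b]$. To finish you still need $\langle b\rangle[a][b] = \langle -1\rangle[a][b]$, which can be extracted from $(\langle b\rangle - 1)[a][b] = \eta[a][b][b] = \eta[a][-1][b] = (\langle -1\rangle - 1)[a][b]$ using $[b][b] = [-1][b]$ from (b) — but this auxiliary identity appears nowhere in your plan, so as written the argument does not close, and your own phrasing (``must conspire to leave precisely\dots'') signals that you have not verified it. The paper avoids all of this by expanding $[ab][-ab] = 0$ using Lemma~\ref{lemma_1}\ref{lemma_1_1}, i.e.\ $[xy] = [x] + \langle x\rangle[y]$: every term is then a product of exactly two symbols with central unit coefficients, and the whole reduction to $\langle a\rangle\bigl([b][a] + \langle -1\rangle[a][b]\bigr) = 0$ is a four-term computation. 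I recommend switching to that decomposition for (e).
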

\begin{proof}
	Let us prove the first point. If $a = 1$, this is clear since $[1] = 0$. Assume $a \neq 1$.
	From the equality $-a = \frac{1 - a}{1 - a^{-1}}$ we can use relation~\ref{lemma_1_5} of Lemma~\ref{lemma_1} to get \begin{align*}
		[-a] = [1 - a] - \langle -a \rangle [1 - a^{-1}].
	\end{align*}
	Multiplying this equality by $[a]$, we obtain \begin{align*}
		[a][-a] & = [a][1-a] - \langle -a \rangle [a][1-a^{-1}]               \\
		        & = -\langle -a \rangle [a][1-a^{-1}]                         \\
		        & = \langle -a \rangle \langle a \rangle [a^{-1}][1 - a^{-1}] \\
		        & = 0,
	\end{align*}
	the second equality and last equality being the Steinberg relation~\ref{rel1} and the third being point~\ref{lemma_1_5} of~\ref{lemma_1}.

	For the proof of~\ref{lemma_2_2}, notice first that by part~\ref{lemma_1_1} of Lemma~\ref{lemma_1}, one has $[-a] = [-1] + \langle -1\rangle [a]$.
	Multiplying this on the left by $[a]$ and using the previously proved relation $[a][-a] = 0$, one obtains $[a][-1] = - \langle -1 \rangle [a][a]$, that is to say $[a][-1] = \epsilon[a][a]$.
	Similarly, by multiplying $[-a] = [-1] + \langle -1\rangle [a]$ on the right by $[a]$ gives $[-1][a] = \epsilon[a][a]$.
	Noticing that $\epsilon^2 = 1$, this proves that $[a][a] = \epsilon[a][-1]$ and that $[a][a] = \epsilon[-1][-a]$. Since one has \begin{align*}
		0 & = [1]                             \\
		  & = \left[{(-1)}^2\right]           \\
		  & = [-1] + \langle -1 \rangle [-1],
	\end{align*}
	one gets that $\epsilon[-1] = [-1]$ so that $\epsilon[a][-1] = [a][-1]$ and $\epsilon[-1][-a] = [-1][a]$, which finishes proving~\ref{lemma_2_2}.

	Let us prove part~\ref{lemma_2_3}. By definition of $\left \langle a^2 \right \rangle$ it is enough to prove that $\eta[a^2] = 0$.
	As
	\[
		\left[a^2\right] = 2[a] + \eta[a][a],
	\]
	one obtains $[a][a] = [-1][a]$ by~\ref{lemma_2_2} so that $\left[a^2\right] = 2[a] + \eta[-1][a]$, \textit{i.e} $\left[a^2\right] = (2 + \eta[-1])[a]$.
	Multiplying by $\eta$ and using relation~\ref{rel4} yields $\eta\left[a^2\right] = 0$.
	We now prove~\ref{lemma_2_4}.
	One starts with $[a][-a] = 0$ and multiplies by $\eta^2$ to get $\eta[a]\eta[-a] = 0$.
	Since $\eta[a] =\langle a \rangle - 1$ and $\eta[-a] =\langle -a \rangle - 1$, by expanding $(\langle a \rangle - 1)(\langle -a \rangle - 1)$ one gets $\langle a \rangle + \langle -a\rangle = 1 + \left\langle -a^2 \right\rangle$.
	By~\ref{lemma_2_3} along with part~\ref{lemma_1_2} of Lemma~\ref{lemma_1}, we get $\left\langle -a^2\right\rangle = \langle -1 \rangle$ thus $\langle a \rangle + \langle -a\rangle = h$.

	Finally, for the proof of~\ref{lemma_2_5}, one starts with the relation $[ab][-ab] = 0$ which was proved in~\ref{lemma_2_1}. Expanding it using relation~\ref{lemma_1_1} of~\ref{lemma_1} gives
	\[
		([a] + \langle a \rangle [b])([-a] + \langle -a \rangle [b]) = 0.
	\]
	Expanding the left hand side of this equality yields
	\[
		[a][-a] + \langle -a \rangle [a][b] + \langle a \rangle [b][-a] + \langle -a^2 \rangle [b][b] = 0.
	\]
	By part~\ref{lemma_2_1} we have $[a][-a] = 0$, by part~\ref{lemma_2_2} we also have $[b][b] = [-1][b]$,
	and as noted before $\langle -a^2 \rangle = \langle - 1\rangle $. Thus above equality becomes
	\[
		\langle a \rangle\left([b][-a] + \langle -1 \rangle [a][b]\right) + \langle -1 \rangle [-1][b] = 0.
	\]
	Using that $[-a] = [a] + \langle a \rangle [-1]$, this gives
	\[
		\langle a \rangle\left([b][a] + \langle -1 \rangle [a][b]\right) + \langle a^2 \rangle [b][-1]+\langle -1 \rangle [-1][b] = 0.
	\]
	and once again $\langle a^2 \rangle = 1$ and $[b][-1]  +\langle -1 \rangle [-1][b] = 0$ by~\ref{lemma_2_2} (recall that $\epsilon = -\langle-1\rangle$).
	Thus we obtain
	\[
		\langle a \rangle\left([b][a] + \langle -1 \rangle [a][b]\right) = 0.
	\]
	Since $\langle a \rangle$ is a unit by~\ref{lemma_1_4} of~\ref{lemma_1} this implies the equality $[b][a] = \epsilon[a][b]$, which is the desired relation since $\epsilon^2 = 1$.
\end{proof}
We immediately obtain the following
\begin{corollarynoproof}
	The ring $\KMW_*(F)$ is $\epsilon$-graded commutative.
\end{corollarynoproof}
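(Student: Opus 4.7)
The plan is to establish the $\epsilon$-graded commutation $xy = \epsilon^{nm} yx$ for homogeneous elements $x \in \KMW_n(F)$ and $y \in \KMW_m(F)$ by reducing, via Lemma~\ref{lemma_generators}, to a commutation statement for pairs of generators, and then verifying each case.

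By bilinearity of the product and Lemma~\ref{lemma_generators}, I may assume $x$ and $y$ are monomials in the generators $\eta$ (degree $-1$), $\langle u \rangle$ (degree $0$), and $[u]$ (degree $1$). A straightforward induction on the total number of factors then reduces the claim to showing that any two such generators $g, g'$ of degrees $p, q$ commute as $g g' = \epsilon^{pq} g' g$: once this is known, moving a single generator past a monomial contributes the expected total power of $\epsilon$, and telescoping yields the general formula.

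For the generator-by-generator verification, when both $g$ and $g'$ are degree $1$ symbols $[a]$ and $[b]$, the required identity $[a][b] = \epsilon[b][a]$ is exactly Lemma~\ref{lemma_2}~\ref{lemma_2_5}. Whenever one of the two generators is of the form $\langle u \rangle$, its centrality from Lemma~\ref{lemma_1}~\ref{lemma_1_2} together with $\epsilon^0 = 1$ settles the case. The pair $(\eta, \eta)$ is trivial.

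The only mildly subtle case is $g = \eta$ and $g' = [u]$, where relation~\ref{rel3} gives outright commutation $\eta[u] = [u]\eta$, whereas the $\epsilon$-graded formula demands $\eta[u] = \epsilon^{-1}[u]\eta = \epsilon[u]\eta$ (using $\epsilon^2 = 1$). To bridge the two, I would observe that $1 - \epsilon = 1 + \langle -1\rangle = h$ and that $h$ is central, being a polynomial in $\langle -1\rangle$ which is itself central by Lemma~\ref{lemma_1}~\ref{lemma_1_2}. Therefore $(1-\epsilon)[u]\eta = [u] h \eta$, and this vanishes by relation~\ref{rel4}. This is the one point where the computation is not entirely automatic; it is really an unpacking of the relation $\epsilon \eta = \eta$ already noted just before Lemma~\ref{lemma_2}.
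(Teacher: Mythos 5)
Your proof is correct and is essentially the argument the paper leaves implicit in stating this as an immediate corollary: reduce to monomials in the generators via Lemma~\ref{lemma_generators}, use Lemma~\ref{lemma_2}~\ref{lemma_2_5} for pairs of degree-one symbols, centrality of $\langle u \rangle$ from Lemma~\ref{lemma_1}~\ref{lemma_1_2}, and the identity $\epsilon\eta=\eta$ (i.e.\ $h\eta=0$, relation~\ref{rel4}) to reconcile relation~\ref{rel3} with the required sign $\epsilon^{-1}=\epsilon$. One small imprecision: the pair $(\eta,\eta)$ is not quite trivial either, since the graded formula there reads $\eta^2=\epsilon^{(-1)(-1)}\eta^2=\epsilon\eta^2$, but it is handled by the very same observation $\epsilon\eta=\eta$ that you spell out for the mixed case.
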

Let us recall some facts about the Grothendieck-Witt ring of a field $F$.
\begin{definition}
	Let $F$ be a field. The {\normalfont{Grothendieck-Witt group of $F$}}, denoted by $\GW(F)$, is the group completion of the monoid of isomorphism classes of symmetric inner product spaces over $F$ with respect to the addition given by orthogonal sum.
\end{definition}
Notice that the Grothendieck-Witt group inherits a commutative ring structure via the tensor product of symmetric inner product spaces.
For a unit element $u$ of $F$, let $\langle u \rangle$ be the inner product space of dimension one with a basis element $e$ such that $\left(e | e\right) = u$.
If we denote by $h$ again the element $1 + \langle -1 \rangle$ of $\GW(F)$, then relation~\ref{GW2} (see Theorem~\ref{GW_relations} below) shows that the subgroup generated by $h$ is actually an ideal of $\GW(F)$.
The Witt ring $\mathrm{W}(F)$ of $F$ is by definition the quotient $\GW(F)/(h)$. We will work with the Grothendieck-Witt group through an explicit presentation:
\begin{theorem}\label{GW_relations}
	As an abelian group, the Grothendieck-Witt Ring of $F$ is generated by the set of symbols $\left\{\langle u\rangle,\,u \in F^\times\right\}$.
	The following relations give a presentation of $\GW(F)$ with this generating set.
	\begin{enumerate}[label={\normalfont{$\mathbf{GW\arabic*}$}}]
		\item\label{GW1} $\langle uv^2\rangle = \langle u\rangle$.
		\item\label{GW2} $\langle u \rangle + \langle -u \rangle = 1 + \langle-1\rangle$.
		\item\label{GW3} $\langle u \rangle + \langle v \rangle = \langle u + v \rangle + \langle (u+v)uv \rangle$ if $u + v \neq 0$.
	\end{enumerate}
\end{theorem}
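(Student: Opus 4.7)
The plan is to proceed in three stages: (i) show the symbols $\langle u \rangle$ generate $\GW(F)$ as an abelian group, (ii) verify that the three relations hold, and (iii) establish completeness of the relations.

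For (i), I would invoke the classical theorem that every nondegenerate symmetric bilinear form over $F$ admits an orthogonal basis, so that every isomorphism class of inner product space decomposes as $\langle a_1 \rangle \perp \cdots \perp \langle a_n \rangle$ for some $a_i \in F^\times$. Combined with the definition of $\GW(F)$ as a group completion of the monoid of isomorphism classes of forms, this exhibits every element as a $\mathbb{Z}$-linear combination of the generators $\langle u \rangle$.

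For (ii), the relation~\ref{GW1} is obtained by the change of basis $e \mapsto v^{-1} e$ inside $\langle u v^2 \rangle$. For~\ref{GW2}, the form $\langle u \rangle \perp \langle -u \rangle$ contains the isotropic vector $e_1 + e_2$, so it is a hyperbolic plane and is therefore isometric to $\langle 1 \rangle \perp \langle -1 \rangle$. For~\ref{GW3}, when $u + v \neq 0$, one takes the new basis $f_1 = e_1 + e_2$, $f_2 = v\, e_1 - u\, e_2$ of $\langle u \rangle \perp \langle v \rangle$; direct computation gives $(f_1|f_1) = u+v$, $(f_2|f_2) = uv(u+v)$, and $(f_1|f_2) = 0$, yielding the claimed isometry $\langle u \rangle \perp \langle v \rangle \cong \langle u + v \rangle \perp \langle (u+v)uv \rangle$.

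For (iii), let $A(F)$ denote the abelian group defined by the stated presentation; the preceding steps give a surjection $A(F) \twoheadrightarrow \GW(F)$, and injectivity is the substantive content. I would invoke Witt's chain equivalence theorem: any two orthogonal diagonalizations of an isometric symmetric bilinear form are connected by a finite sequence of elementary equivalences applied to pairs of adjacent diagonal entries, and each such elementary equivalence is exactly a substitution of the form~\ref{GW3} (possibly combined with~\ref{GW1}). Together with Witt's cancellation theorem, which controls the passage from the monoid of isomorphism classes to its group completion (so that a formal equality in $\GW(F)$ stemming from stable isometry of forms can be reduced to an actual chain of isometries of forms of the same dimension), this shows that every relation holding in $\GW(F)$ is a consequence of~\ref{GW1}--\ref{GW3}. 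I expect this last step to be the main obstacle, as it relies on the nontrivial classical theorems of Witt on symmetric bilinear forms, whereas (i) and (ii) are essentially routine linear algebra.
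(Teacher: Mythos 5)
Your argument is the standard one and is correct for fields of characteristic $\neq 2$, where it matches the paper's proof. The genuine gap is that the statement is asserted for an arbitrary field, and every one of your three steps breaks in characteristic $2$, which is precisely the case the paper takes pains to treat (and the case the whole second section of the paper depends on). First, the diagonalization theorem you invoke in (i) is false in characteristic $2$: an alternating space such as the hyperbolic plane $H_1$ with Gram matrix $\begin{pmatrix}0&1\\1&0\end{pmatrix}$ admits no orthogonal basis, so generation by the $\langle u \rangle$ requires the extra observation that $H_1 = 2\langle 1\rangle$ holds in $\GW(F)$; the paper gets this from the genuine isometry $3\langle 1 \rangle \cong \langle 1 \rangle \perp H_1$ followed by cancellation in the group completion. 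Second, your verification of~\ref{GW2} via ``isotropic binary form $\Rightarrow$ hyperbolic plane $\cong \langle 1\rangle\perp\langle -1\rangle$'' fails in characteristic $2$: there $\langle u\rangle\perp\langle -u\rangle = \langle u, u\rangle$ represents exactly the coset $u\cdot {(F^\times)}^2$, so it is not isometric to $\langle 1,1\rangle$ unless $u$ is a square, and it is never isometric to the alternating plane $H_1$; the relation $2\langle u\rangle = 2\langle 1\rangle$ is only a stable identity in $\GW(F)$, not an isometry of binary forms. (Your change of basis for~\ref{GW3} is fine in all characteristics.)

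Third, and most seriously for (iii), Witt cancellation fails for symmetric bilinear forms in characteristic $2$ (again $\langle 1,1,1\rangle \cong \langle 1\rangle \perp H_1$ while $\langle 1,1\rangle \not\cong H_1$), so you cannot reduce a formal equality in the group completion to an actual isometry of equal-dimensional diagonal forms and then apply chain equivalence. The paper replaces this step by the decomposition of any space as metabolic $\perp$ anisotropic together with the uniqueness of the anisotropic part of a Witt class, handles binary relations by hand via that dichotomy, and only then invokes the switch lemma (Lemma~\ref{switch_in_gw}, i.e.\ Milnor--Husemoller III.5.6), which is stated in a form valid in characteristic $2$. If you intend your proof only for characteristic $\neq 2$, say so explicitly; as written, it does not prove the theorem in the generality in which it is stated and used.
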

This proposition is classical, but we give a particular attention to the characteristic 2 case.
\begin{proof}
	In characteristic different from $2$, any symmetric inner product space admits an orthogonal basis,
	thus the claim about generators follow.

	In characteristic $2$, any symmetric inner product space can be written as an orthogonal sum of dimension $1$ spaces and some symplectic space $N$~\cite[Cor.~I.3.3]{milnor_husemoller1973}.
	The symplectic space must admits a symplectic basis, such that the matrix of its inner product is $H_n = \begin{pmatrix}0 & \mathrm{I}_n \\ \mathrm{I}_n & 0\end{pmatrix}$~\cite[Corr.~I.3.5]{milnor_husemoller1973}.
	To prove the claim about generators, it suffices to show that $H_n = 2n\langle 1 \rangle$ in the Grothendieck-Witt group.
	An easy reordering of the basis elements shows that $H_n$ is an orthogonal sum of $n$ hyperbolic planes $H_1$, thus it suffices to show $H_1 = 2$ in $\GW(F)$.
	While these two spaces might not be isomorphic as symmetric inner product spaces, there is an isomorphism of symmetric inner product spaces $3\langle 1 \rangle \cong \langle 1 \rangle + H$.
	Indeed, let $V$ be a symmetric inner product space over a field of characteristic 2 with an orthogonal basis $(e_1,e_2,e_3)$ such that $\left(e_i | e_i\right) = 1$ for all $i$,
	then the basis $(e_1 + e_2 + e_3, e_1 + e_3, e_2 + e_3)$ exhibits $V$ as isomorphic to $\langle 1 \rangle + H_1$.

	All three relations are easily seen to be satisfied in $\GW(F)$ and the harder part is to show that any relation arises from the ones given in the statement of the proposition.
	Assume \[\langle \alpha_1 \rangle + \langle \alpha_2 \rangle = \langle \beta_1 \rangle + \langle \beta_2 \rangle\] in $\GW(F)$.
	Recall from~\cite[Thm.~III.1.7]{milnor_husemoller1973} that a class in the Witt ring contains one and only one anisotropic inner product space up to isomorphism,
	and that any inner product space splits as an orthogonal sum of a metabolic\footnote{Recall that a symmetric inner product space $F$ is said to be metabolic if there exists a subspace $N \subset F$ such that $N^\bot = N$.} space and an anisotropic one.
	Thus, if $\langle\alpha_1\rangle + \langle\alpha_2\rangle$ contains an isotropic vector, then so does $\langle \beta_1 \rangle + \langle \beta_2 \rangle$.
	In this case, there are some $x_1$ and $x_2$ in $F$ such that $\alpha_1 x_1^2 + \alpha_2 x_2^2 = 0$. This means that $\langle \alpha_2 \rangle = \langle -\alpha_1\rangle$,
	so using~\ref{GW1} we may replace the couple $(\alpha_1, \alpha_2)$ by $(\alpha_1, -\alpha_1)$, and hence replace it by $(1, -1)$, and then do the same with $(\beta_1,\beta_2)$.
	If the spaces are anisotropic, then they are isomorphic, then $\beta_1 = \alpha_1x_1^2 + \alpha_2x_2^2$ for some $x_1, x_2$, so that we may replace $(\alpha_1, \alpha_2)$ by $(\alpha_1x_1^2, \alpha_2x_2^2)$ using~\ref{GW1}, and then replace it by $(\beta_1, \beta_1\alpha_1\alpha_2)$ using~\ref{GW3}.
	But the spaces are isomorphic, so their determinant modulo ${\left(F^{\times}\right)}^2$ must be the same and hence $\beta_2 = \beta_1\alpha_1\alpha_2c^2$ for some $c \in F^\times$. Thus~\ref{GW1} again, we can replace $(\beta_1, \beta_1\alpha_1\alpha_2)$ by $(\beta_1,\beta_2)$.

	If there is an equality \[
		\sum\limits_{i=1}^n\langle\alpha_i\rangle = \sum\limits_{j=0}^{m}\langle \beta_i \rangle
	\]
	in $\GW(F)$ then $n = m$ by considering the rank.
	One then concludes using Lemma~\ref{switch_in_gw} below that appears as Lemma III.5.6 in~\cite{milnor_husemoller1973} which is valid in characteristic 2.
	The lemma there is stated for $\mathrm{W}(F)$, but its proof also works \textit{mutatis mutandis} for $\GW(F)$.
\end{proof}
\begin{lemmanoproof}\label{switch_in_gw}
	Let $\alpha_1,\ldots,\alpha_n$ and $\beta_1,\ldots,\beta_n$ be elements of $F^\times$ such that $\sum\limits_{i=1}^n\langle \alpha_i \rangle = \sum\limits_{i=1}^n\langle \beta_i \rangle$ in $\GW(F)$. It is possible to rewrite $(\alpha_1,\ldots,\alpha_n)$ into $(\beta_1,\ldots,\beta_n)$ by changing only two entries at a time while preserving the Grothendick-Witt class.
\end{lemmanoproof}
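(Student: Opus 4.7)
The plan is to proceed by induction on $n$, with the case $n = 1$ being immediate (from $\langle \alpha_1 \rangle = \langle \beta_1 \rangle$ one gets $\beta_1 = \alpha_1 c^2$ by comparing the underlying one-dimensional forms, and the single move $\alpha_1 \mapsto \alpha_1 c^2$ justified by $\mathbf{GW1}$ suffices) and the case $n = 2$ being exactly the detailed analysis carried out in the proof of Theorem~\ref{GW_relations}. So the real content lies in the inductive step $n \geq 3$.

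Assume the statement for $n - 1$ and consider an identity $\sum_{i=1}^n \langle \alpha_i \rangle = \sum_{i=1}^n \langle \beta_i \rangle$ in $\GW(F)$. Since both sides have the same rank and the same class, the underlying symmetric inner product spaces are isomorphic, so $\langle \alpha_1, \ldots, \alpha_n \rangle$ represents $\beta_1$: there exist $c_1, \ldots, c_n \in F$, not all zero, with $\sum_{i=1}^n \alpha_i c_i^2 = \beta_1$. My first goal is to transform $(\alpha_1, \ldots, \alpha_n)$, by two-entry moves, into a tuple of the form $(\beta_1, \gamma_2, \ldots, \gamma_n)$. For each index $i$ with $c_i \neq 0$ I first replace $\alpha_i$ by $\alpha_i c_i^2$ using $\mathbf{GW1}$ (a single-entry, hence a fortiori two-entry, move). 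Next, for any two positions $i < j$ whose current entries $a = \alpha_i c_i^2$ and $b = \alpha_j c_j^2$ satisfy $a + b \neq 0$, the relation $\mathbf{GW3}$ gives
\[
  \langle a \rangle + \langle b \rangle = \langle a + b \rangle + \langle (a+b)\,ab \rangle,
\]
a two-entry move that aggregates partial sums in position~$i$. Iterating this procedure absorbs all the terms $\alpha_i c_i^2$ with $c_i \neq 0$ into the first slot, yielding $\beta_1$ there.

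Once the first entry is $\beta_1$, the identity reads $\langle \beta_1 \rangle + \sum_{i=2}^n \langle \gamma_i \rangle = \langle \beta_1 \rangle + \sum_{i=2}^n \langle \beta_i \rangle$. Since $\langle \beta_1 \rangle$ is anisotropic (as $\beta_1 \in F^\times$), Witt cancellation is valid here even in characteristic $2$, giving an identity of sums of length $n - 1$ to which the induction hypothesis applies; prepending $\beta_1$ to the chain of two-entry modifications produced on the tail finishes the argument.

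The main obstacle lies in the accumulation procedure of the second paragraph, because $\mathbf{GW3}$ requires the sum of the two entries being merged to be nonzero. If at some intermediate stage the running partial sum $s$ cancels against the next summand $t$ (i.e.\ $s + t = 0$), then $\mathbf{GW3}$ cannot be applied directly; however the pair $(s, t) = (s, -s)$ is then controlled by $\mathbf{GW2}$ together with $\mathbf{GW1}$, which grant the freedom to swap it for any other hyperbolic pair $(u, -u)$ as a two-entry move. A careful bookkeeping, following the pattern of Milnor–Husemöller's Lemma~III.5.6, rearranges the order in which the indices are merged so that every intermediate sum is nonzero, bypassing the obstruction.
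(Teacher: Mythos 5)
Note first that the paper does not actually prove this lemma: it is quoted from Milnor--Husemöller (Lemma III.5.6) with the remark that the argument there, stated for $\mathrm{W}(F)$, adapts to $\GW(F)$ and is valid in characteristic $2$. Your proposal is therefore an independent attempt, and it follows the classical proof of Witt's chain-equivalence theorem: induct on $n$, use isometry of the two forms to realize $\beta_1$ as a represented value of $\langle\alpha_1,\ldots,\alpha_n\rangle$, accumulate that representation into the first slot via~\ref{GW3}, cancel, and recurse. In characteristic $\neq 2$ this is essentially correct modulo the bookkeeping you defer. The genuine gap is in characteristic $2$ --- exactly the case the paper needs. Your inductive step begins with ``same rank and same class in $\GW(F)$, hence the spaces are isomorphic''; this is Witt cancellation, which fails for symmetric bilinear forms in characteristic $2$. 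Concretely, over $F=\mathbb{F}_2(t)$ relation~\ref{GW2} gives $\langle t\rangle+\langle t\rangle=\langle 1\rangle+\langle 1\rangle$ in $\GW(F)$ (since $-t=t$), yet $\langle t,t\rangle$ represents only the values $t(x+y)^2\in tF^2$ and in particular does not represent $1$. So the forms are not isometric, there are no $c_i$ with $tc_1^2+tc_2^2=1$, and the very first move of your inductive step --- producing $\beta_1$ in the first slot from a representation $\beta_1=\sum\alpha_ic_i^2$ --- is unavailable. The lemma is still true in this example, but only via a \ref{GW2}-move, not via your representation argument; a correct characteristic-$2$ proof must isolate the anisotropic part (where uniqueness does hold and the representation argument works) and handle the remaining ``metabolic defect'' by \ref{GW1}/\ref{GW2} moves, which is in effect what the cited proof does.

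Two smaller points. The cancellation at the end of your inductive step needs no Witt cancellation and no anisotropy hypothesis: $\GW(F)$ is a group, so $\langle\beta_1\rangle$ cancels formally, and the induction hypothesis is a statement about equality in $\GW(F)$, not about isometry. And the reordering needed to keep all partial sums nonzero in the accumulation step is standard but should be written out; note that in characteristic $2$ the obstruction $s+t=0$ simply means $s=t$, a case your \ref{GW2} fix does cover.
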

The Witt ring $\mathrm{W}(F)$ has a unique prime ideal $\mathrm{I}(F)$ such that $\mathrm{W}(F)/\mathrm{I}(F) = \mathbb{Z}/2\mathbb{Z}$.
This ideal is the kernel of the rank modulo $2$ morphism.
Through this rank modulo $2$ morphism, one gets a canonical cartesian square
\[\begin{tikzcd}
		\GW(F) & \mathbb{Z} \\
		\mathrm{W}(F) & \mathbb{Z}/2\mathbb{Z}
		\arrow["{\mathrm{rk}}", from=1-1, to=1-2]
		\arrow["{}"', from=1-1, to=2-1]
		\arrow["{}", from=1-2, to=2-2]
		\arrow["{\mathrm{rk}\ \mathrm{mod}\ 2}"', from=2-1, to=2-2]
		\arrow["\ulcorner"{anchor=center, pos=0.125}, draw=none, from=1-1, to=2-2].
	\end{tikzcd}\]
\begin{lemma}\label{GW_to_KMW} The subgroup of $\KMW_0(F)$ generated by the elements $\langle u\rangle$ satisfies the relations of $\GW(F)$.
\end{lemma}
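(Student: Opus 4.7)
I need to verify the three defining relations \ref{GW1}, \ref{GW2}, \ref{GW3} of $\GW(F)$ as identities among the elements $\langle u \rangle \in \KMW_0(F)$. The first two are immediate consequences of what is already available, so I would dispose of them quickly and concentrate on \ref{GW3}.

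For \ref{GW1}, I would combine the multiplicativity $\langle uv^2 \rangle = \langle u \rangle \langle v^2 \rangle$ from Lemma~\ref{lemma_1}~\ref{lemma_1_2} with the identity $\langle v^2 \rangle = 1$ of Lemma~\ref{lemma_2}~\ref{lemma_2_3}. For \ref{GW2}, I would observe that, unpacking the definitions of $h$ and $\langle -1 \rangle$, one has $h = \eta[-1] + 2 = 1 + (1 + \eta[-1]) = 1 + \langle -1 \rangle$, so that Lemma~\ref{lemma_2}~\ref{lemma_2_4} reads exactly as $\langle u \rangle + \langle -u \rangle = 1 + \langle -1 \rangle$.

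The substantive case is \ref{GW3}. Given $u,v \in F^\times$ with $s := u + v \neq 0$, I would set $a := u/s$ and $b := v/s$, so that $a, b \in F^\times$ and $a + b = 1$. Using the multiplicativity of $\langle \cdot \rangle$ (Lemma~\ref{lemma_1}~\ref{lemma_1_2}) together with the already-proved \ref{GW1} (to turn $\langle s^3 a b \rangle$ into $\langle s a b \rangle$), the target identity becomes
\[
\langle s \rangle \langle a \rangle + \langle s \rangle \langle b \rangle = \langle s \rangle + \langle s \rangle \langle ab \rangle.
\]
Since $\langle s \rangle$ is a unit by Lemma~\ref{lemma_1}~\ref{lemma_1_4}, I can cancel it and reduce the whole claim to
\[
\langle a \rangle + \langle b \rangle = 1 + \langle ab \rangle \qquad \text{whenever } a, b \in F^\times \text{ with } a + b = 1.
\]
Expanding $\langle x \rangle = 1 + \eta[x]$, this is equivalent to $\eta([a] + [b]) = \eta[ab]$. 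Using relation~\ref{rel2} to write $[ab] = [a] + [b] + \eta[a][b]$, the equality reduces in turn to the vanishing $\eta^2 [a][b] = 0$, which is immediate from the Steinberg relation~\ref{rel1} since $b = 1 - a$.

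The only nontrivial step is \ref{GW3}, and even there the essential idea is the change of variables $a = u/s$, $b = v/s$ that allows the Steinberg relation to be applied; everything else is formal manipulation using the identities already proved in Lemmas~\ref{lemma_1} and~\ref{lemma_2}.
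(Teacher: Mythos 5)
Your proof is correct and follows essentially the same route as the paper: relations \ref{GW1} and \ref{GW2} are read off from Lemma~\ref{lemma_2}~\ref{lemma_2_3} and~\ref{lemma_2_4}, and \ref{GW3} is reduced by multiplicativity to the case $u+v=1$, where it amounts to the vanishing of $\eta^2[a][1-a]$ via the Steinberg relation. You merely run the final computation in the opposite direction (reducing the target to $\eta^2[a][b]=0$ rather than expanding $\eta^2[u][v]=0$) and spell out the reduction step more explicitly, which is a welcome addition of detail rather than a different argument.
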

\begin{proof}
	Parts~\ref{lemma_2_3} and~\ref{lemma_2_4} of Lemma~\ref{lemma_2} show that relations~\ref{GW1} and~\ref{GW2} are satisfied.
	Let us show that~\ref{GW3} is satisfied.
	Since the symbol $\langle u \rangle$ is multiplicative, one may assume that $u+v = 1$.
	By applying~\ref{rel1}, one has $[u][v] = 0$.
	Multiplying this relation by $\eta^2$, using equalities $\eta[u] =\langle u \rangle - 1$ and $\eta[v] =\langle v \rangle - 1$ and expanding, one gets
	\[
		\langle u \rangle + \langle v \rangle = 1 + \langle uv \rangle
	\]
	as expected.
\end{proof}

This lemma implies that there is a well-defined morphism $\phi_0: \GW(F) \to \KMW_0(F)$ which is moreover surjective by Lemma~\ref{lemma_generators}.
By relation~\ref{rel4}, multiplication by $\eta$ kills $h$. Thus, for any $n > 0$, the composition of surjective morphisms
\[
	\GW(F) \xrightarrow{} \KMW_0(F) \xrightarrow{\cdot \eta^n} \KMW_{-n}(F)
\]
factors to a surjective map $\phi_{-n}: \mathrm{W}(F) \to \KMW_{-n}(F)$.
\begin{theorem}\label{prop_iso}
	For all $n \geq 0$, the map $\phi_{-n}$ defined above is an isomorphism.
\end{theorem}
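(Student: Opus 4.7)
The plan is to construct an explicit left inverse $\psi_{-n}$ to $\phi_{-n}$; since we already know $\phi_{-n}$ is surjective, this will suffice. Write $A_0 \colonequals \GW(F)$ and $A_{-n} \colonequals \mathrm{W}(F)$ for $n \geq 1$. Using the presentation from Lemma~\ref{lemma_generators_big}, I will define $\psi_{-n} \colon \KMW_{-n}(F) \to A_{-n}$ on generators by
\[\psi_{-n}\bigl([\eta^{r+n}, u_1, \ldots, u_r]\bigr) \colonequals \prod_{i=1}^{r}(\langle u_i \rangle - 1),\]
with the convention that the empty product is $1$. This is the natural guess, since $\eta[u] = \langle u \rangle - 1$ and $\langle u \rangle$ is central, so that $\eta^r[u_1]\cdots[u_r] = \prod_{i}(\langle u_i \rangle - 1)$ already holds in $\KMW_0(F)$.

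The heart of the proof is checking that this assignment respects the three families of defining relations from Lemma~\ref{lemma_generators_big}. After expansion, these translate respectively into: (a) $(\langle u \rangle - 1)(\langle -u \rangle - 1) = 0$, which follows from $\langle -u^2 \rangle = \langle -1 \rangle$ and $\langle u \rangle + \langle -u \rangle = 1 + \langle -1 \rangle$ (relations~\ref{GW1} and~\ref{GW2}); (b) the identity $\langle ab \rangle - 1 = (\langle a \rangle - 1) + (\langle b \rangle - 1) + (\langle a \rangle - 1)(\langle b \rangle - 1)$, which is immediate from $\langle ab \rangle = \langle a \rangle \langle b \rangle$; and (c) an identity of the form $h \cdot \prod_{j \neq i}(\langle u_j \rangle - 1) = 0$, after recognizing that $(\langle -1 \rangle - 1) + 2 = h$.

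Verifying (c) is where a case split occurs and constitutes the main technical point. For $n \geq 1$, the relation is automatic because $h = 0$ in $\mathrm{W}(F)$. For $n = 0$, it requires the identity $h \cdot (\langle u \rangle - 1) = 0$ in $\GW(F)$, which I would deduce from~\ref{GW2}: $h(\langle u \rangle - 1) = \langle u \rangle + \langle u \rangle \langle -1 \rangle - h = \langle u \rangle + \langle -u \rangle - h = 0$. This is exactly the content that distinguishes the $n = 0$ target $\GW$ from the $n \geq 1$ target $\mathrm{W}$, and it explains structurally why the factorization through $\mathrm{W}$ occurs precisely in strictly negative degrees.

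Finally, to conclude that $\psi_{-n}$ is a left inverse, it suffices to check $\psi_{-n} \circ \phi_{-n} = \mathrm{id}_{A_{-n}}$ on the generators $\langle u \rangle$: writing $\phi_{-n}(\langle u \rangle) = \eta^n + \eta^{n+1}[u]$ and applying $\psi_{-n}$ termwise gives $1 + (\langle u \rangle - 1) = \langle u \rangle$, as desired. The main obstacle is thus packaged into verifying relation (c); the remainder is a direct manipulation with the presentations at hand.
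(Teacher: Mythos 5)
Your proof is correct, but it takes a genuinely different and more economical route than the paper's. The paper constructs a single morphism of graded \emph{rings} $\psi\colon \KMW_*(F)\to \mathrm{J}^*(F)$, where $\mathrm{J}^*(F)$ is the pullback of $\mathrm{I}^*(F)$ and $\KM_*(F)$ over $i^*(F)$; this lets one check only the four ring relations \ref{rel1}--\ref{rel4} once and for all, but it forces a detour through positive degrees (including a surjectivity argument for $\psi_n$, $n\geq 1$, via Milnor's map $s_n$) that is not needed for the present statement, though it pays off later for the cartesian square mentioned in the introduction. You instead work one degree at a time with the abelian-group presentation of Lemma~\ref{lemma_generators_big} and define the retraction directly by $[\eta^{r+n},u_1,\ldots,u_r]\mapsto \prod_{i}(\langle u_i\rangle-1)$. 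Your verifications are correct: (a) follows from $\langle -u^2\rangle=\langle -1\rangle$ and \ref{GW2}, (b) from multiplicativity $\langle ab\rangle=\langle a\rangle\langle b\rangle$, (c) from $h\langle u\rangle=\langle u\rangle+\langle -u\rangle=h$ in $\GW(F)$; and the concluding computation $\psi_{-n}\bigl(\eta^n+\eta^{n+1}[u]\bigr)=\langle u\rangle$ on the generators $\langle u\rangle$ of $\GW(F)$, resp.\ $\mathrm{W}(F)$, does exhibit a left inverse, whence injectivity and, with the already-established surjectivity, the isomorphism. The price is reliance on Lemma~\ref{lemma_generators_big}, which the paper states without proof, and there is one edge case you should make explicit: in degree $0$ the instance of the third relation with $r=1$ (so that $\prod_{j\neq i}$ is empty) would read $h=0$, which is false in $\GW(F)$. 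This is not a defect of your map but of a too-literal reading of the presentation --- that instance is not a relation of $\KMW_0(F)$ either, since only $\eta h$, not $h$, is killed, so the third family must be restricted to $m\geq 2$; a one-line remark to this effect would close the argument.
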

\begin{proof}
	Recall Milnor's epimorphism $s_n: \KM_n(F) \to i^n(F)$ where
	$i^n(F) = \mathrm{I}^{n}(F)/\mathrm{I}^{n+1}(F)$ is the $n$-th graded piece of the graded ring associated to the filtration $\mathrm{I}^n(F)$ of $\mathrm{W}(F)$.
	By definition, $s_n$ sends the symbol $\{a_1,\ldots, a_n\}$ to the class of the symmetric inner product space $\langle1, -a_1\rangle \otimes \cdots \otimes \langle 1, -a_n \rangle$.
	For $n \leq 0$, we set $\mathrm{I}^n(F) = \mathrm{W}(F)$ and for all $n \in \mathbb{Z}$ we let $\mathrm{J}^n(F)$ be the abelian group defined a cartesian square
	\[\begin{tikzcd}
			\mathrm{J}^n(F) & \mathrm{I}^n(F) \\
			\KM_n(F) & i^n(F)
			\arrow["{}", from=1-1, to=1-2]
			\arrow["{}"', from=1-1, to=2-1]
			\arrow["{}", from=1-2, to=2-2]
			\arrow["{}"', from=2-1, to=2-2]
			\arrow["\ulcorner"{anchor=center, pos=0.125}, draw=none, from=1-1, to=2-2]
		\end{tikzcd}\]
	in which the rightmost vertical map is the quotient map $\mathrm{I}^n(F) \twoheadrightarrow \mathrm{I}^n(F)/\mathrm{I}^{n+1}(F)$.
	The collection of modules $\mathrm{J}^{*}(F)$ form a graded ring as the maps in the pullback defining $\mathrm{J}^n$ respect multiplication as $n$ varies.
	Notice that for $n = -1$, this pullback square is actually the square
	\[\begin{tikzcd}
			\mathrm{W}(F) & \mathrm{W}(F) \\
			0 & 0
			\arrow["{}", from=1-1, to=1-2]
			\arrow["{}"', from=1-1, to=2-1]
			\arrow["{}", from=1-2, to=2-2]
			\arrow["{}"', from=2-1, to=2-2]
			\arrow["\ulcorner"{anchor=center, pos=0.125}, draw=none, from=1-1, to=2-2]
		\end{tikzcd}\]
	so that there is a canonical isomorphism \[\mathrm{J}^{-1}(F) \cong \mathrm{W}(F).\]
	Let $\eta \in \mathrm{J}^{-1}(F)$ be the element that corresponds to the element $1 \in \mathrm{W}(F)$ through this isomorphism.
	Let $u \in F^\times$ and let $[u]$ be the element of $\mathrm{J}^{1}(F)$ corresponding to the pair $(\{u\}, \langle u \rangle - 1) \in \KM_1(F) \times \mathrm{I}(F)$.
	As $u$ is sent to $\langle -1, u \rangle \in i^1(F)$ and since $\langle u \rangle - 1 = \langle u \rangle + \langle - 1 \rangle$ in $\mathrm{W}(F)$
	and as both classes represent a symmetric inner product space with an orthogonal basis $e_1, e_2$ such that $\left(e_1 | e_1\right) = -1$ and $\left(e_2 | e_2\right) = u$, they indeed map to the same element of $i^1(F)$ and so $[u]$ is indeed well-defined.
	We first need the following:
	\begin{lemma}\label{def_psi}
		The element $\eta \in \mathrm{J}^{-1}(F)$ and the symbols $[u]$ for $u \in F^\times$ defined above satisfy the Milnor-Witt relations,
		{\normalfont{i.e}} there exists a morphism $\psi : \KMW_*(F) \to \mathrm{J}^*(F)$ such that $\psi(\eta) = \eta$ and $\psi([u]) = [u]$ for all $u \in F^\times$.
	\end{lemma}
	\begin{proof}
		Let us check that the four relations defining $\KMW_*(F)$ hold in the graded ring $\mathrm{J}^\ast(F)$ with the given symbols $\eta$ and $[u]$ so that we indeed get a morphism $\psi: \KMW_*(F) \to \mathrm{J}^*(F)$.
		The Steinberg relation~\ref{rel1} holds by definition for symbols $\{u\}$ of $\KM_*(F)$. By~\ref{GW3} and~\ref{GW2} one has \begin{align*}
			\left(\langle u \rangle + \langle -1 \rangle\right) \otimes \left(\langle 1 - u \rangle + \langle - 1 \rangle\right) & = \langle u(1-u) \rangle + \langle -u \rangle + \langle u - 1 \rangle + 1  \\
			                                                                                                                     & = \langle u(1-u) \rangle + \langle - 1 \rangle + \langle u(u-1)\rangle  +1 \\
			                                                                                                                     & = 0
		\end{align*}
		in $\mathrm{W}(F)$,
		so relation~\ref{rel1} holds in $\mathrm{J}^\ast(F)$ for symbols of the form $[u]$. By definition, $\{ab\} = \{a\} + \{b\}$ in $\KM_1(F)$, and $\eta.([a][b])$ corresponds in $\KM_1(F) \times \mathrm{I}(F)$ to $(0, (\langle a \rangle - 1)(\langle b \rangle - 1))$.
		In $\mathrm{I}(F)$, one gets \begin{align*}
			(\langle a \rangle - 1)\otimes(\langle b \rangle - 1) & = \langle ab \rangle - \langle a \rangle - \langle b \rangle + 1.
		\end{align*}
		So there are equalities \begin{align*}
			[a] + [b] + \eta [ab] & = (\{a\} + \{b\}, \langle a \rangle - 1 + \langle b \rangle - 1 + \langle ab \rangle - \langle a \rangle - \langle b \rangle + 1) \\
			                      & = (\{a\} + \{b\}, \langle ab \rangle - 1)                                                                                         \\
			                      & = [ab]
		\end{align*}
		in $\mathrm{J}^1(F)$
		hence relation~\ref{rel2} holds.
		Relation~\ref{rel3} is obvious due to our choice of $\eta$ as the unit element of $\mathrm{W}(F)$.
		Finally, relation~\ref{rel4} holds since \begin{align*}
			h & = \eta[-1] + 2                    \\
			  & = (0, \langle -1 \rangle - 1) + 2 \\
			  & = (1, \langle - 1 \rangle + 1)    \\
			  & = (1, 0),
		\end{align*}
		by~\ref{GW2}, so that $\eta.h = (0, 0)$.
	\end{proof}
	To conclude the proof of Theorem~\ref{prop_iso}, we will prove:
	\begin{lemma}\label{psi_surj}
		The map $\psi_*: \KMW_*(F) \to \mathrm{J}^*(F)$ defined as above is surjective in all degree.
	\end{lemma}
	\begin{proof}
		First, notice that given any class $v \in I$, there exists some $u \in F^\times$ such that $(\{u\}, v) \in \mathrm{J}^1(F)$. Indeed, the congruence relations \begin{align*}
			\langle \alpha \rangle + \langle \beta \rangle \equiv \langle -\alpha\beta \rangle + \langle -1 \rangle & \mod \mathrm{I}^2(F) \\
			\text{and} \quad                                                                                                               \\
			\langle -1 \rangle + \langle -1\rangle + \langle -1 \rangle \equiv \langle 1 \rangle                    & \mod \mathrm{I}^2(F)
		\end{align*}
		in $\mathrm{W}(F)$ imply that any class $v = \langle \alpha_1 \rangle + \cdots \langle \alpha_{2r} \rangle$ is congruent modulo $\mathrm{I}^2(F)$ to some $\langle u \rangle + \langle -1\rangle$.
		This precisely means that $(\{u\}, v) \in \mathrm{J}^{1}(F)$.
		Moreover, any two $u_1, u_2$ such that $(\{u_1\},v) \in \mathrm{J}^{1}(F)$ and $(\{u_2\}, v) \in \mathrm{J}^{1}(F)$ must be such that $u_1$ and $u_2$ only differ by a square in $F^{\times}$ as
		$s_1: \KM_1(F) \to \mathrm{I}(F)/\mathrm{I}^2(F)$
		has a kernel precisely equal to $2\KM_{1}(F)$ so that $s_1$ induces an isomorphism $F^{\times}/{(F^{\times})}^2 \similarrightarrow \mathrm{I}(F)/\mathrm{I}^2(F)$.
		We claim that if $(\{u\}, v)$ is in the image of $\psi$, then so is $\left(\left\{u'\right\}, v\right)$ for any $u'$ such that $\left(\left\{u'\right\},v\right) \in \mathrm{J}^{1}(F)$.
		Indeed, one can write $u = u'w^2$, if $(\{u\},v) = \sum\limits_{i=1}^r\left(\left[\alpha_i\right], \langle \alpha_i \rangle + \langle -1 \rangle\right)$, then $u = \prod\limits_{i=1}^{r}\alpha_i$.
		Replacing $\alpha_1$ by $\alpha_1w^{-2}$ does not affect the Witt class $v$ and gives \begin{align*}
			\left(\left\{u'\right\}, v\right) = \left(\left\{\alpha_1v^{-2}\right\}, \langle \alpha_1 \rangle + \langle -1 \rangle\right) + \sum\limits_{i=2}^r(\{\alpha_i\}, \langle \alpha_i \rangle + \langle -1 \rangle).
		\end{align*}
		Thus, to show surjectivity on $\mathrm{J}^1(F)$, it suffices to show that for every $v \in I$, there is some $u \in F^\times$ such that
		$(\{u\}, v) \in \mathrm{J}^1(F)$ and $(\{u\}, v)$ is in the image of $\KMW_1(F) \to \mathrm{J}^1(F)$.
		This is certainly true if $v = \langle \alpha \rangle - 1$ for some $\alpha$ and it is known that such classes generate additively the group $\mathrm{I}(F)$. So $\KMW_1(F) \to \mathrm{J}^1(F)$ is an epimorphism.

		Now let $(u, v) \in \mathrm{J}^n(F)$ and recall that $\KM_\ast(F)$ is generated as a ring by $\KM_1(F)$, so that
		$u = \sum\limits_{i=1}^r\prod\limits_{j=1}^n\left\{a_{ij}\right\}$
		with $a_{i,j} \in F^\times$.
		By definition, $v \equiv s_n(u)\mod\mathrm{I}^{n+1}$, so
		\begin{align*}
			v & = \sum\limits_{i=1}^r\bigotimes\limits_{j=1}^n\langle 1, -a_{ij} \rangle + c
		\end{align*}
		with $c \in \mathrm{I}^{n+1}$. This yields
		\begin{align*}
			(u,v) = \sum\limits_{i=1}^r\prod\limits_{j=1}^n \left(\{a_{ij}\}, \langle a_{ij} \rangle + \langle -1 \rangle\right) + (0,c).
		\end{align*}
		Notice that since $c \in \mathrm{I}^{n+1}$, $(0,c) \in \mathrm{J}^{1}(F) = \mathrm{Im}(\psi_1)$ which proves that $\psi_n$ is indeed surjective if $n \geq 1$ as we already know that $\psi_1$ is surjective.

		It remains to show surjectivity if $n \leq 0$. For $n = 0$. The pullback square defining $\mathrm{J}(F)$ is in fact
		\[\begin{tikzcd}
				\GW(F) & \mathrm{W}(F) \\
				\mathbb{Z} & \mathbb{Z}/2 \mathbb{Z}
				\arrow["{}", from=1-1, to=1-2]
				\arrow["{}"', from=1-1, to=2-1]
				\arrow["{}", from=1-2, to=2-2]
				\arrow["{}"', from=2-1, to=2-2]
				\arrow["\ulcorner"{anchor=center, pos=0.125}, draw=none, from=1-1, to=2-2]
			\end{tikzcd}\]
		and the symbol $\langle u \rangle$ of $\KMW_0(F)$ is sent to $\langle u \rangle$ of $\GW(F) = \mathrm{J}^{1}(F)$.
		Indeed $\langle u \rangle = 1 + \eta[u]$, so $\langle u \rangle$ corresponds to the element $\eta.([u], \langle u \rangle - 1) + (1, 1)$ which is the element $(1, \langle u \rangle)$ since multiplication by $\eta$ sends a class $(a, b) \in \mathrm{J}^{k}(F)$ to $(0, b)$, viewed as an element of $\mathrm{J}^{k-1}(F)$.
		The element $(1, \langle u \rangle)$ corresponds to $\langle u \rangle$ with the identification $\mathrm{J}^{0}(F) = \GW(F)$.
		For $n < 0$, the pullback square is again the square
		\[\begin{tikzcd}
				\mathrm{W}(F) & \mathrm{W}(F) \\
				0 & 0
				\arrow["{}", from=1-1, to=1-2]
				\arrow["{}"', from=1-1, to=2-1]
				\arrow["{}", from=1-2, to=2-2]
				\arrow["{}"', from=2-1, to=2-2]
				\arrow["\ulcorner"{anchor=center, pos=0.125}, draw=none, from=1-1, to=2-2]
			\end{tikzcd}\]
		The element $\langle u \rangle$ of $\mathrm{J}^{n}(F) = \mathrm{W}(F)$ corresponds to $(0, \langle u \rangle)$ with this identification. This element is exactly $\eta^{-n}(1, \langle u \rangle)$, \textit{i.e} $\eta^{-n}\psi(\langle u \rangle)$.
	\end{proof}
	Back to the proof of Theorem~\ref{prop_iso}. We know that $\psi$ is an epimorphism, and for $n \geq 0$, the composite $\phi_n \circ \psi_n $ is the identity. Indeed, the proof of the preceding lemma shows that this is the case on generators. But since both maps are epimorphism, this means that $\phi_n$ is an isomorphism.
\end{proof}
\begin{corollarynoproof}\label{cor_loc}
	The canonical morphism $\KMW_*(F) \to \mathrm{W}(F)\left[t,t^{-1}\right]$ defined by \[[u] \to t^{-1}(\langle u \rangle - 1)\] induces an isomorphism $\KMW_*(F)\left[\eta^{-1}\right] \to \mathrm{W}(F)\left[t,t^{-1}\right]$.
\end{corollarynoproof}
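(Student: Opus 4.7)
The plan is to first define the map explicitly as a graded ring homomorphism $\KMW_*(F) \to \mathrm{W}(F)[t,t^{-1}]$ (placing $t$ in degree $-1$ so that the prescribed formula $[u] \mapsto t^{-1}(\langle u \rangle - 1)$ and the forced assignment $\eta \mapsto t$ both preserve degree), and then use Theorem~\ref{prop_iso} to verify that after inverting $\eta$ the resulting map is an isomorphism in each degree.

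For well-definedness I would check the four defining relations of $\KMW_*(F)$. Relation~\ref{rel1} boils down to the identity $(\langle a \rangle - 1)(\langle 1 - a \rangle - 1) = 0$ in $\mathrm{W}(F)$: expanding gives $\langle a(1-a) \rangle - \langle a \rangle - \langle 1 - a \rangle + 1$, which vanishes by~\ref{GW3} applied to the pair $(a, 1 - a)$ whose sum is $1$. Relation~\ref{rel2}, after collecting a factor of $t^{-1}$, reduces to $\langle ab \rangle - 1 = (\langle a \rangle - 1) + (\langle b \rangle - 1) + (\langle a \rangle - 1)(\langle b \rangle - 1)$, which is immediate from $\langle a \rangle\langle b \rangle = \langle ab \rangle$. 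Relation~\ref{rel3} is automatic because $\mathrm{W}(F)[t,t^{-1}]$ is commutative. Finally, relation~\ref{rel4} follows because $h = \eta[-1] + 2$ maps to $(\langle -1 \rangle - 1) + 2 = 1 + \langle -1 \rangle$, which is zero in $\mathrm{W}(F)$ by definition.

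Since $\eta$ is sent to the unit $t$, the morphism extends to a ring homomorphism $\Psi \colon \KMW_*(F)[\eta^{-1}] \to \mathrm{W}(F)[t, t^{-1}]$. To see that $\Psi$ is an isomorphism I invoke Theorem~\ref{prop_iso}, which supplies for every $n \geq 0$ an isomorphism $\phi_{-n} \colon \mathrm{W}(F) \xrightarrow{\sim} \KMW_{-n}(F)$ with $\phi_{-n-1} = \eta \cdot \phi_{-n}$ by construction. Thus, after identifying each $\KMW_{-n}(F)$ with $\mathrm{W}(F)$ via $\phi_{-n}$, multiplication by $\eta$ becomes the identity on $\mathrm{W}(F)$, so each graded piece of the localization $\KMW_*(F)[\eta^{-1}]$ collapses to a single copy of $\mathrm{W}(F)$. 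The map $\Psi$ then restricts in degree $n$ to the identification $\mathrm{W}(F) \to \mathrm{W}(F) \cdot t^{-n}$, completing the proof.

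The main obstacle is really just the verification of the Steinberg relation, where one genuinely needs input from the theory of the Witt ring (namely~\ref{GW3}); the remainder of the argument is formal bookkeeping once Theorem~\ref{prop_iso} is in hand.
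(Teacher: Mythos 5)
Your argument is correct and is exactly the intended one: the paper states this corollary without proof, describing it in the introduction as a direct consequence of Theorem~\ref{prop_iso}, and your verification of the four relations followed by the identification of each graded piece of $\KMW_*(F)[\eta^{-1}]$ with $\mathrm{W}(F)$ via the $\phi_{-n}$ fills in precisely the details the paper leaves to the reader.
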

\begin{lemma}\label{lemma_puissances}
	Let $a \in F^\times$ and $n \in \mathbb{Z}$. Let $n_{\epsilon} \colonequals \sum\limits_{i=1}^n\left\langle {(-1)}^{i-1} \right\rangle$ if $n \geq 0$ and $-\langle-1\rangle{(-n)}_\epsilon$ if $n < 0$.\\
	Then, we have $[a^n] = n_\epsilon [a]$ in $\KMW_1(F)$.
\end{lemma}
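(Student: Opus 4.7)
The computation will be an induction on $n \geq 0$ followed by a reduction of the case $n < 0$ to the positive case. The key identity to prove first, as a standalone preparatory observation, is
\[\langle a\rangle[a] = \langle -1\rangle[a], \qquad \text{equivalently} \qquad \langle a^{-1}\rangle[a] = \langle -1\rangle[a].\]
These both drop straight out of Lemma~\ref{lemma_2}\ref{lemma_2_2}: multiplying $[a][a]=[-1][a]$ by $\eta$ on the left and recalling $\eta[u] = \langle u\rangle - 1$ gives $(\langle a\rangle-1)[a] = (\langle -1\rangle-1)[a]$, whence the first identity; the second follows either by replacing $a$ with $a^{-1}$ and using Lemma~\ref{lemma_1}\ref{lemma_1_5}, or by multiplying the first by $\langle a^{-1}\rangle$ and using that $\langle -1\rangle^2 = 1$ (Lemma~\ref{lemma_2}\ref{lemma_2_3}).

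\textbf{Positive case, by induction on $n\geq 0$.} The base cases $n=0$ (both sides are $0$ since $[1]=0$ and $0_\epsilon$ is the empty sum) and $n=1$ (both sides equal $[a]$ since $1_\epsilon = \langle 1\rangle = 1$) are immediate from Lemma~\ref{lemma_1}\ref{lemma_1_3}. For the inductive step, I will apply Lemma~\ref{lemma_1}\ref{lemma_1_1} to get
\[[a^{n+1}] = [a\cdot a^n] = [a] + \langle a\rangle[a^n] = [a] + \langle a\rangle\, n_\epsilon[a].\]
Since $\langle a\rangle$ is central (Lemma~\ref{lemma_1}\ref{lemma_1_2}) I commute it past $n_\epsilon$ and apply the key identity to obtain $\langle a\rangle n_\epsilon[a] = n_\epsilon \langle -1\rangle[a]$. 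Together with the arithmetic identity
\[(n+1)_\epsilon = 1 + \sum_{j=1}^n \langle(-1)^j\rangle = 1 + \langle -1\rangle\, n_\epsilon,\]
which I will note at the start of the induction, this finishes the positive case.

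\textbf{Negative case.} For $n<0$, set $m = -n > 0$. I will first write $[a^n] = [(a^{-1})^m]$ and apply the positive case (to the element $a^{-1}$) to get $[a^n] = m_\epsilon [a^{-1}]$. Next, Lemma~\ref{lemma_1}\ref{lemma_1_5} gives $[a^{-1}] = -\langle a^{-1}\rangle[a]$, so centrality of $\langle a^{-1}\rangle$ yields
\[[a^n] = -\langle a^{-1}\rangle\, m_\epsilon[a].\]
Applying the second form of the key identity inside this expression (again using centrality to pull $\langle a^{-1}\rangle$ next to $[a]$) replaces $\langle a^{-1}\rangle$ by $\langle -1\rangle$ and gives exactly $[a^n] = -\langle -1\rangle\,(-n)_\epsilon[a] = n_\epsilon[a]$.

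\textbf{Main obstacle.} The only genuinely non-formal input is the key identity $\langle a\rangle[a] = \langle -1\rangle[a]$; once that is established, everything else is bookkeeping using centrality of the $\langle\cdot\rangle$-symbols and the recurrence $(n+1)_\epsilon = 1 + \langle -1\rangle\, n_\epsilon$. The only subtle point is to remember, in the negative case, that we must commute $\langle a^{-1}\rangle$ through $m_\epsilon$ before applying the identity, since the identity only asserts the equality $\langle a^{-1}\rangle[a] = \langle -1\rangle[a]$ when $\langle a^{-1}\rangle$ sits immediately to the left of $[a]$.
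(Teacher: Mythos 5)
Your proof is correct and follows essentially the same route as the paper's: an induction on $n \geq 0$ whose inductive step combines relation~\ref{rel2} (in the guise of Lemma~\ref{lemma_1}\ref{lemma_1_1}) with the identity $[a][a]=[-1][a]$ from Lemma~\ref{lemma_2}\ref{lemma_2_2} and the recurrence $(n+1)_\epsilon = 1 + \langle -1\rangle n_\epsilon$; your ``key identity'' $\langle a\rangle[a]=\langle -1\rangle[a]$ is just the $\eta$-multiplied form of the step the paper performs inline. In fact your write-up is slightly more complete, since you carry out the reduction of the case $n<0$ to the positive case via $[a^{-1}]=-\langle a^{-1}\rangle[a]$, a step the paper's proof leaves implicit.
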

\begin{proof}
	We proceed by induction: if $n = 0$, this is clear: both expressions are $0$. For $n \geq 1$,
	\begin{align*}
		\left[a^n\right] & = \left[a^{n-1}\right] + [a] + \eta\left[a^{n-1}\right][a]          \\
		                 & = {(n-1)}_\epsilon[a] + [a] + {(n-1)}_\epsilon\eta[a][a]            \\
		                 & = {(n-1)}_\epsilon[a] + [a] + {(n-1)}_\epsilon\eta[-1][a]           \\
		                 & = {\left({(n-1)}_\epsilon + 1 + {(n-1)}_\epsilon\eta[-1]\right)}[a] \\
		                 & = \left({(n-1)}_\epsilon\left(1 + \eta[-1]\right) + 1\right)[a]     \\
		                 & = \left(\langle-1\rangle {(n-1)}_{\epsilon} + 1\right)[a]           \\
		                 & = n_\epsilon[a].
	\end{align*}
\end{proof}
\begin{proposition}
	Let $F$ be a field in which any unit is a square. Then the epimorphism \[\KMW_*(F) \to \KM_*(F)\] is an epimorphism in degree $\geq 0$, and $\KMW_*(F) \to \KW_*(F)$ is an isomorphism in degree < 0.
\end{proposition}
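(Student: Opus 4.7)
The plan is to analyze the kernels of the two canonical projections separately and show that they vanish on generators in the appropriate degree range. The map $\KMW_*(F)\to\KM_*(F)$ is the quotient by the two-sided ideal generated by $\eta$, so its degree $n$ part is $\eta\cdot\KMW_{n+1}(F)$; the map $\KMW_*(F)\to\KW_*(F)$ is the quotient by the two-sided ideal generated by $h\in\KMW_0(F)$, whose degree $n$ part is $h\cdot\KMW_n(F)$. Both statements then reduce to computations on the canonical generators given by Lemma~\ref{lemma_generators}.

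For the first assertion, in degree $n\geq 0$ the kernel is $\eta\cdot\KMW_{n+1}(F)$ with $n+1\geq 1$. By Lemma~\ref{lemma_generators}\,(1) it suffices to show that $\eta\cdot[u_1]\cdots[u_{n+1}]=0$ for arbitrary $u_1,\ldots,u_{n+1}\in F^\times$. Rewriting this element as $(\langle u_1\rangle-1)[u_2]\cdots[u_{n+1}]$ and using the hypothesis to write $u_1=v^2$, Lemma~\ref{lemma_2}\,\ref{lemma_2_3} gives $\langle u_1\rangle=\langle v^2\rangle=1$, so the product vanishes, which yields the desired isomorphism.

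For the second assertion, in degree $n<0$ Lemma~\ref{lemma_generators}\,(2) reduces the problem to showing that $h\cdot\eta^{-n}\langle u\rangle=0$. Since $-n\geq 1$ we can factor this as $\eta^{-n-1}\cdot(\eta h)\cdot\langle u\rangle$, which vanishes by relation~\ref{rel4}. Observe that this part does not actually use the hypothesis that every unit is a square and holds for any field $F$.

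Neither step presents a genuine obstacle: both amount to a one-line computation on the canonical generators of $\KMW_n(F)$ identified in Lemma~\ref{lemma_generators}, combined with a single already-established relation. The substantive content has been packaged into the generation statements of Lemma~\ref{lemma_generators} and the computations of Lemma~\ref{lemma_2}, so once those are in hand, the proposition reduces to elementary bookkeeping.
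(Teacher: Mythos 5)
Your proof is correct, and for the negative-degree half it takes a genuinely different, more elementary route than the paper. In degrees $\geq 0$ your computation
\[
\eta[u_1]\cdots[u_{n+1}]=(\langle u_1\rangle-1)[u_2]\cdots[u_{n+1}]=0 \quad (u_1 \text{ a square})
\]
uses essentially the same input as the paper, which derives $\eta[b]=0$ for all $b$ from $\eta\left[a^2\right]=2\eta[a]$ and $2\eta=\eta h=0$ and then argues that $\KMW_n$ and $\KM_n$ have the same presentation; your packaging as the vanishing of the homogeneous ideal $(\eta)_n=\eta\KMW_{n+1}(F)$ on the generators of Lemma~\ref{lemma_generators} is, if anything, cleaner. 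In degrees $<0$ the paper invokes Theorem~\ref{prop_iso} to identify $\KMW_n(F)$ with $\mathrm{W}(F)$, deduces $\KW_n(F)\cong\mathrm{W}(F)/2\mathrm{W}(F)$, and then appeals to the external fact that $\mathrm{W}(F)\cong\mathbb{Z}/2\mathbb{Z}$ for a quadratically closed field; your one-line computation $h\cdot\eta^{-n}\langle u\rangle=\eta^{-n-1}(\eta h)\langle u\rangle=0$ bypasses both Theorem~\ref{prop_iso} and the hypothesis on $F$, and your observation that this half holds over an arbitrary field is accurate (it recovers the general fact that $\KW_n(F)\cong\KMW_n(F)\cong\mathrm{W}(F)$ for $n<0$). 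The paper's route buys an explicit identification of the groups involved; yours buys generality and self-containedness. One remark: both your argument and the paper's actually establish that $\KMW_*(F)\to\KM_*(F)$ is an \emph{isomorphism} in degrees $\geq 0$, so the word ``epimorphism'' in the statement is evidently a typo, and your proof correctly targets the intended claim.
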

\begin{proof}
	Since $-1$ is a square. We have that $\langle -1 \rangle = 1$, so that $h = 2$ and relation~\ref{rel4} becomes $2\eta = 0$. By the previous lemma, $\eta[a^2] = 2\eta[a]$, so $\eta[a^2] = 0$. Since any unit is a square, $\eta[b] = 0$ for all $b$, so that relation~\ref{rel2} becomes $[ab] = [a] + [b]$.
	Thus in degree $\geq 0$, $\KM_*$ and $\KMW_*$ have the same generator and relations.
	In degree $< 0$, we know that $\KMW_n(F) \cong \mathrm{W}(F)$ by Proposition~\ref{prop_iso}. Since $h = 2$, $K^{W}_n(F) \cong \mathrm{W}(F)/2\mathrm{W}(F)$. But it is known that for a field in which every unit is a square, $\mathrm{W}(F) \cong \mathbb{Z}/2\mathbb{Z}$.
	So that taking the quotient by $2$ does not change the Witt Ring.
\end{proof}
\section{Witt K-theory in characteristic 2}
In this section, we bring a particular attention to Witt K-theory in characteristic two. Recall that $\KW_*(F)$ is defined to be the quotient $\KMW_*(F)/(h)$.
Recall also that we set $\mathrm{I}^n(F) \colonequals \mathrm{W}(F)$ for $n \leq 0$ and that $\mathrm{I}^n(F)$ is the usual $n$-th power of the fundamental ideal if $n > 0$.
With these notations, $\mathrm{I}^*(F) \colonequals \bigoplus\limits_{n \in \mathbb{Z}} \mathrm{I}^n(F)$ is a graded $\mathrm{W}(F)$-algebra.

Given an element $a \in F^\times$, the class of $[a]$ in $\KW_*(F)$ will be denoted by $\lBrack a \rBrack$. We will denote by $\lAngle a \rAngle \colonequals 1 + \langle -a \rangle \in \mathrm{W}(F)$ the Pfister form attached to $a$.
Notice that $\lAngle a \rAngle = 1 - \langle a \rangle$ in $\mathrm{W}(F)$, as $-\langle a \rangle = \langle -a \rangle$ by~\ref{GW2}.
Although the signs are important when working in a general setting to have well-defined morphisms, we will mostly ignore them when working in a characteristic $2$ setting, leaving them only to indicate how things might go in a more general setting.
Our goal is to prove the following:
\begin{theorem}\label{KW_iso_I}
	Let $F$ be a field of characteristic $2$, there is a unique isomorphism $\KW_*(F) \to \mathrm{I}^*(F)$ sending the class of $\lBrack a \rBrack$ to the Pfister form $\lAngle a \rAngle$.
\end{theorem}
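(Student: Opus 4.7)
My plan is to construct $\phi \colon \KW_*(F) \to \mathrm{I}^*(F)$ at the level of $\KMW_*(F)$ by the assignments $\phi(\eta) = 1 \in \mathrm{W}(F) = \mathrm{I}^{-1}(F)$ and $\phi(\lBrack a\rBrack) = \lAngle a\rAngle \in \mathrm{I}(F)$, check that it descends modulo $h$, and then verify it is an isomorphism in each degree by combining Theorem~\ref{prop_iso} in non-positive degrees with an inductive argument powered by Kato's theorem in positive degrees.

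The well-definedness step amounts to checking that the four defining Milnor--Witt relations hold in $\mathrm{I}^*(F)$ under these assignments: \ref{rel1} unfolds to the identity $\lAngle a, 1-a\rAngle = 0$ in $\mathrm{W}(F)$, a direct expansion using~\ref{GW1}--\ref{GW3}; \ref{rel2} unfolds to $\lAngle ab\rAngle = \lAngle a\rAngle + \lAngle b\rAngle + \lAngle a\rAngle \lAngle b\rAngle$, another direct expansion of Pfister forms; \ref{rel3} is commutativity of $\mathrm{I}^*(F)$. For $\phi(h) = 0$, note that in characteristic $2$ relation~\ref{GW2} combined with $-a = a$ forces $\mathrm{W}(F)$ to be $2$-torsion, so $\phi(h) = \lAngle -1\rAngle + 2 = 0$. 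Uniqueness of $\phi$ follows from the fact that $\KW_*(F)$ is generated as a ring by $\eta$ and the symbols $\lBrack a\rBrack$. For non-positive degrees, Theorem~\ref{prop_iso} gives $\KMW_{-n}(F) \cong \mathrm{W}(F)$ which descends to $\KW_{-n}(F) \cong \mathrm{W}(F) = \mathrm{I}^{-n}(F)$ since $h$ vanishes in $\mathrm{W}(F)$, and in degree zero $\KMW_0(F)/(h) = \GW(F)/(h) = \mathrm{W}(F) = \mathrm{I}^0(F)$. Surjectivity in positive degrees is immediate as $n$-fold Pfister forms additively generate $\mathrm{I}^n(F)$ and each equals $\phi(\lBrack a_1\rBrack \cdots \lBrack a_n\rBrack)$.

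The main obstacle is injectivity in positive degrees, which I would handle by induction on $n$, with the base case $n \leq 0$ above and Kato's theorem~\cite{kato_symmetric_1982} on the isomorphism $s_n \colon \KM_n(F)/2 \xrightarrow{\sim} i^n(F)$ in characteristic $2$ as the crucial input. The relevant commutative diagram has exact rows
\begin{align*}
&\KW_{n+1}(F) \xrightarrow{\eta} \KW_n(F) \twoheadrightarrow \KM_n(F)/2 \to 0,\\
&0 \to \mathrm{I}^{n+1}(F) \hookrightarrow \mathrm{I}^n(F) \twoheadrightarrow i^n(F) \to 0,
\end{align*}
the top row being exact via the identification $\KW_*(F)/\eta \cong \KM_*(F)/2$. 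Applying the snake lemma to the short exact sequence $0 \to K_* \to \KW_*(F) \to \mathrm{I}^*(F) \to 0$ with respect to the $\eta$-multiplication map, using that multiplication by $\eta$ is injective on $\mathrm{I}^*(F)$ and that Kato's map is an isomorphism, forces the kernel $K_n$ to satisfy $K_n = \eta K_{n+1}$ for every $n$; iterating yields $K_n \subseteq \bigcap_{k \geq 0} \eta^k \KW_{n+k}(F)$. The heart of the argument is the final step, which shows this intersection vanishes: one combines the Arason--Pfister Hauptsatz $\bigcap_k \mathrm{I}^k(F) = 0$ in $\mathrm{W}(F)$ with the $\mathrm{W}(F)$-algebra structure of $\KW_*(F)$, exploiting the identification $\eta^k \KW_{n+k}(F) = \mathrm{I}^k(F) \cdot \KW_n(F)$ obtained from $\eta^k \lBrack a_1\rBrack \cdots \lBrack a_{n+k}\rBrack = \lAngle a_1, \ldots, a_k\rAngle \cdot \lBrack a_{k+1}\rBrack \cdots \lBrack a_{n+k}\rBrack$. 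This is the step where Morel's strategy from~\cite{morel_puissances2004} is carefully adapted to characteristic $2$ with Kato's theorem replacing Milnor's conjecture, and it is here that one must leverage the degree-zero base identification $\KW_0(F) = \mathrm{W}(F)$ to transport the vanishing from the target back to the source.
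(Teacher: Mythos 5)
Your construction of the map, the verification of the Milnor--Witt relations in $\mathrm{I}^*(F)$, the treatment of degrees $\leq 0$, surjectivity, and the derivation of $K_n = \eta K_{n+1}$ from Kato's theorem via the $\eta$-multiplication exact sequence all match the paper's argument. The genuine gap is in the final step. From $K_n = \eta^k K_{n+k}$ you conclude $K_n \subseteq \bigcap_{k}\eta^k\KW_{n+k}(F)$ and propose to kill this intersection with the Arason--Pfister Hauptsatz via the identification $\eta^k\KW_{n+k}(F) = \mathrm{I}^k(F)\cdot\KW_n(F)$. But $\bigcap_k\mathrm{I}^k(F) = 0$ in $\mathrm{W}(F)$ does not imply $\bigcap_k\mathrm{I}^k(F)\cdot M = 0$ for a $\mathrm{W}(F)$-module $M$ that you do not yet control --- and $\KW_n(F)$, being given only by generators and relations, is precisely what you do not yet control. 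The most you can extract in this direction is $\eta^n K_n \subseteq \bigcap_m\mathrm{I}^m(F) = 0$ inside $\KW_0(F)\cong\mathrm{W}(F)$, and upgrading $\eta^n K_n = 0$ to $K_n = 0$ requires injectivity of multiplication by $\eta$ on $\KW_*(F)$ in positive degrees, which is essentially the statement being proved. Your closing remark about transporting the vanishing back to the source via $\KW_0(F) = \mathrm{W}(F)$ runs into the same circularity: the recursion $K_n = \eta K_{n+1}$ moves to ever higher degrees and never reaches degree zero.

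The paper closes this gap by a different device: it first reduces to fields $F$ finitely generated over $\mathbb{F}_2$, for which $[F:F^2]$ is finite, and then proves outright that $\KW_N(F) = 0$ once $2^N > [F:F^2]$; this step uses the chain-equivalence Lemma~\ref{switch} together with the isotropy of the pure part of a large Pfister form, and it is the real characteristic-2 content of the proof beyond the formal diagram chase. With both $\KW_N(F)$ and $\mathrm{I}^N(F)$ vanishing in high degrees, the recursion $K_n = \eta K_{n+1}$ (equivalently, the descending induction via the five lemma) terminates. To repair your plan you would need to add both the reduction to finitely generated fields and this high-degree vanishing of $\KW_*(F)$.
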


This theorem appears as the main theorem of~\cite{morel_puissances2004} in characteristic different from two.
The main interest of this result is that through the explicit generators and relations of $\KW_*(F)$, one gets explicit generators and relations for the powers of the fundamental ideal.
One of the main input is Milnor's conjecture that $s_{n,F}: \KM_n(F)/2\KM_n(F) \to \mathrm{I}^n(F)/\mathrm{I}^{n+1}(F)$ is an isomorphism of abelian groups, which has been proven in~\cite{OVV_exact_2007} (see also~\cite{rondigs_slices_2016}).
The characteristic 2 case has been claimed to be provable using similar methods. One can indeed use Arason and Baeza's presentation of the powers of the fundamental ideal in characteristic 2~\cite{arason_baeza_2007} as done in~\cite{omanovic_2015}.
In the characteristic 2 case, the input is again that $s_{n,F}$ is an isomorphism, however it should be noted that this is a much more elementary fact in characteristic 2:
an early proof of this case was given by Kato in~\cite{kato_symmetric_1982} and the techniques of proof are rather elementary compared with the ones involved in the general case.
The proof we give is rather direct and completely elementary except for Kato's theorem.

In this section, $F$ will denote a field of characteristic two unless explicitly stated otherwise.
\begin{lemma}\label{simpl_1}
	Let $F$ be a field of characteristic 2. The following assertions hold in $\KMW_*(F)$:\begin{enumerate}[label = (\roman*)]
		\item $h = 2$,
		\item $\epsilon = -1$,
		\item $\forall n \in \mathbb{Z},\, n_\epsilon = n$,
		\item $\forall a \in F^\times,\, {[a]}^2 = 0$.
	\end{enumerate}
\end{lemma}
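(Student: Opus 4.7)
The plan is to exploit the fact that in characteristic~$2$ we have $-1 = 1$ in $F$, so that the symbol $[-1]$ coincides with $[1]$, which is zero by Lemma~\ref{lemma_1}\ref{lemma_1_3}. This single observation, combined with previously established relations, makes every part of the lemma essentially immediate.

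First I would establish~(i): by definition $h = \eta[-1] + 2$, and since $-1 = 1$ in $F$, the symbol $[-1]$ equals $[1] = 0$, so $h = 2$. Then~(ii) follows because $\epsilon = -\langle -1\rangle = -(1 + \eta[-1]) = -(1 + 0) = -1$, again using $[-1] = 0$ (alternatively via $\langle -1\rangle = \langle 1\rangle = 1_{\KMW_*(F)}$ from Lemma~\ref{lemma_1}\ref{lemma_1_3}).

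For~(iii), I would unfold the definition of $n_\epsilon$. When $n \geq 0$, each summand $\langle (-1)^{i-1}\rangle$ equals $\langle 1\rangle = 1$ in characteristic~$2$, so the sum telescopes to $n$. For $n < 0$, the definition gives $n_\epsilon = -\langle -1\rangle\, (-n)_\epsilon = -1 \cdot (-n) = n$ by~(ii) and the already-treated nonnegative case.

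Finally, for~(iv), I would apply Lemma~\ref{lemma_2}\ref{lemma_2_2}, which yields $[a]^2 = [a][-1]$; since $[-1] = 0$ in characteristic~$2$, we obtain $[a]^2 = 0$.

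There is no real obstacle here: everything reduces to the single identification $[-1] = 0$, and the lemma is a clean bookkeeping statement recording the simplifications that will be in force throughout the remainder of the section.
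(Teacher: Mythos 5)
Your proof is correct and follows essentially the same route as the paper: everything reduces to the observation that $-1 = 1$ in $F$, hence $[-1] = [1] = 0$ and $\langle -1\rangle = 1$. The only (immaterial) divergence is in (iv), where the paper writes $[a]^2 = [a][-a] = 0$ using Lemma~\ref{lemma_2}\ref{lemma_2_1} and $-a = a$, while you invoke Lemma~\ref{lemma_2}\ref{lemma_2_2} to get $[a]^2 = [a][-1] = 0$; both are valid.
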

\begin{proof}
	The first point is the definition of $h = \eta[-1] + 2$ with the fact that $[-1] = [1]$, which is zero by lemma 3. The second point is that $-\langle -1 \rangle = - \langle 1 \rangle$, which is $-1$ by lemma 3.
	The third point is the fact that $n_\epsilon$ becomes a sum of $n$ copies of $\langle 1 \rangle = 1$, and the last point is the fact that
	\begin{align*}
		{[a]}^2 & = [a][-a] \\
		        & = 0,
	\end{align*}
	the last equality being the first point of lemma 5.
\end{proof}
\begin{lemma}
	Let $F$ be a field of characteristic 2, then $\KW_*(F)$ is commutative.
\end{lemma}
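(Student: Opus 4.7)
The plan is to observe that $\KW_*(F)$ is generated as a ring by $\eta$ together with the symbols $\lBrack a \rBrack$ (the classes of $[a]$), so commutativity reduces to checking that every pair of these generators commutes in the quotient. Relation~\ref{rel3} already gives $\eta \lBrack a \rBrack = \lBrack a \rBrack \eta$ directly, so the only non-trivial thing to verify is that $\lBrack a \rBrack \lBrack b \rBrack = \lBrack b \rBrack \lBrack a \rBrack$ in $\KW_*(F)$.

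For this, I would invoke the $\epsilon$-graded commutativity of $\KMW_*(F)$, concretely the identity $[a][b] = \epsilon [b][a]$ from Lemma~\ref{lemma_2}\ref{lemma_2_5}. In characteristic $2$, Lemma~\ref{simpl_1}(ii) gives $\epsilon = -1$, so this reads $[a][b] = -[b][a]$ in $\KMW_*(F)$. Passing to $\KW_*(F) = \KMW_*(F)/(h)$ and using Lemma~\ref{simpl_1}(i), which identifies $h$ with $2$ in characteristic $2$, we obtain $2 = 0$ in $\KW_*(F)$. Consequently $-\lBrack b \rBrack \lBrack a \rBrack = \lBrack b \rBrack \lBrack a \rBrack$, and the identity above becomes $\lBrack a \rBrack \lBrack b \rBrack = \lBrack b \rBrack \lBrack a \rBrack$.

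I do not expect any real obstacle here: the content of the statement is entirely packaged in the two facts $\epsilon = -1$ and $h = 2$ in characteristic $2$, both already recorded in Lemma~\ref{simpl_1}, combined with the $\epsilon$-graded commutativity of $\KMW_*(F)$. The only minor point worth mentioning explicitly is that it suffices to check commutativity on a generating set of the ring, which is immediate by distributivity.
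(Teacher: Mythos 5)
Your argument is essentially identical to the paper's: both reduce commutativity to the $\epsilon$-graded commutativity of $\KMW_*(F)$ together with the two characteristic-$2$ facts $\epsilon = -1$ and $h = 2$ (hence $2 = 0$, so $-1 = 1$ in $\KW_*(F)$). Your extra remark about checking commutativity only on the generators $\eta$ and $\lBrack a \rBrack$ is a harmless elaboration of what the paper leaves implicit.
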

\begin{proof}
	Since $\epsilon = -1$ in $\KMW_*(F)$ and $1 = -1$ in $\KW_*(F)$ (as $h = 2$), $\epsilon$-commutativity of $\KMW_*(F)$ yields the lemma.
\end{proof}
\begin{lemma}\label{square_in_brack}
	Let $F$ be a field of characteristic 2. For all $a \in F^\times$, one has $\lBrack a^2 \rBrack = 0$ in $\KW_*(F)$.
\end{lemma}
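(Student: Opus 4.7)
The plan is to exhibit $[a^2]$ as an $h$-multiple already inside $\KMW_*(F)$, so that passing to the quotient $\KW_*(F) = \KMW_*(F)/(h)$ kills it. Given the identities already collected, the computation is essentially a one-liner, so the main content is just to assemble the right pieces in the right order.

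Concretely, I would first apply relation~\ref{rel2} with $b = a$ to get
\[
[a^2] = 2[a] + \eta [a]^2.
\]
Next I would use the characteristic 2 hypothesis: since $-a = a$, we have $[a]^2 = [a][-a]$, and this vanishes by Lemma~\ref{lemma_2}\ref{lemma_2_1} (as recorded in Lemma~\ref{simpl_1}(iv)). So $[a^2] = 2[a]$ in $\KMW_*(F)$. Finally, Lemma~\ref{simpl_1}(i) says $h = 2$ in characteristic 2, so $[a^2] = h \cdot [a]$, and projecting to $\KW_*(F)$ gives $\lBrack a^2 \rBrack = 0$.

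A slightly more conceptual route, which also makes the identity visible in arbitrary characteristic, is to invoke Lemma~\ref{lemma_puissances} directly: it yields $[a^2] = 2_\epsilon [a] = (\langle 1\rangle + \langle -1 \rangle)[a] = h [a]$, and the same quotient argument concludes. There really is no obstacle here; the lemma is intended as an elementary warm-up for the harder computations to follow, and the proof amounts to pointing at the right line of Lemma~\ref{simpl_1}.
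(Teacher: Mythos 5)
Your proof is correct and matches the paper's argument in substance: both reduce to showing $[a^2]=2[a]=h[a]$ in $\KMW_*(F)$ and then pass to the quotient by $h$. The paper takes your ``alternative'' route, citing Lemma~\ref{lemma_puissances} to get $[a^2]=2_\epsilon[a]$ and then $2_\epsilon=2=h$ from Lemma~\ref{simpl_1}; your primary route just inlines that computation via relation~\ref{rel2} and $[a][-a]=0$, which is equally valid.
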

\begin{proof}
	As $[a^2] = n_\epsilon[a]$ in $\KMW_*(F)$, one gets that $\lBrack a^2 \rBrack = 2\lBrack a \rBrack$ in $\KW_*(F)$ by Lemma~\ref{simpl_1}, which is also $0$ as $h = 2$ in $\KMW_*(F)$.
\end{proof}
The following fact was noticed by Morel in~\cite{morel_puissances2004}.
\begin{proposition}\label{KW_to_I}
	Let $F$ be an arbitrary field (of any characteristic). There is a unique map of graded $\mathrm{W}(F)$-algebras $\theta : \KW_*(F) \to \mathrm{I}^*(F)$ that sends $\lBrack a \rBrack$ to $\lAngle a \rAngle \in \mathrm{I}^1(F)$ and $\eta$ to $-1 \in \mathrm{I}^{-1}(F) = \mathrm{W} (F)$.
\end{proposition}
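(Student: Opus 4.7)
The plan is to define a morphism $\tilde\theta\colon \KMW_*(F) \to \mathrm{I}^*(F)$ from the presentation of $\KMW_*(F)$, show that it vanishes on $h$, and hence descends to the desired $\theta$. Concretely, I would set $\tilde\theta(\eta) = -1 \in \mathrm{I}^{-1}(F) = \mathrm{W}(F)$ and $\tilde\theta([a]) = \lAngle a \rAngle = 1 - \langle a \rangle$, which lies in $\mathrm{I}^1(F)$ as it has rank zero. Both assignments respect the grading.

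Next, I would check the Milnor--Witt relations~\ref{rel1}--\ref{rel4}. Relation~\ref{rel3} is trivial since $\mathrm{I}^*(F)$ inherits commutativity from $\mathrm{W}(F)$. Relation~\ref{rel2} reduces to the direct Witt ring computation
\[
    \lAngle a \rAngle + \lAngle b \rAngle + (-1)\,\lAngle a \rAngle \lAngle b \rAngle = (1-\langle a\rangle) + (1-\langle b\rangle) - (1 - \langle a \rangle - \langle b \rangle + \langle ab \rangle) = \lAngle ab \rAngle.
\]
For the Steinberg relation~\ref{rel1}, using $\langle -1 \rangle = -1$ in $\mathrm{W}(F)$ one has $\lAngle u \rAngle = -(\langle u \rangle + \langle -1 \rangle)$, so $\lAngle a \rAngle \lAngle 1-a \rAngle = 0$ reduces to the identity $(\langle a \rangle + \langle -1 \rangle)(\langle 1-a \rangle + \langle -1 \rangle) = 0$ in $\mathrm{W}(F)$ already established in the proof of Lemma~\ref{def_psi}. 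Finally, for~\ref{rel4} one computes
\[
    \tilde\theta(h) = (-1)(1 - \langle -1 \rangle) + 2 = 1 + \langle -1 \rangle,
\]
which is zero in $\mathrm{W}(F)$ by the defining relation of the Witt ring, so \emph{a fortiori} $\tilde\theta(\eta h) = 0$.

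The vanishing $\tilde\theta(h) = 0$ just computed also shows that $\tilde\theta$ descends through the quotient $\KW_*(F) = \KMW_*(F)/(h)$ to the sought morphism $\theta$. For the $\mathrm{W}(F)$-algebra structure, it suffices to verify that $\theta$ is the identity on $\mathrm{W}(F)$ in degree zero; this is immediate as $\langle a \rangle = 1 + \eta[a]$ is sent to $1 + (-1)(1-\langle a\rangle) = \langle a \rangle$. Uniqueness then follows from Lemma~\ref{lemma_generators}, which implies that $\KW_*(F)$ is generated as a ring by $\eta$ and the symbols $\lBrack a \rBrack$, whose images are prescribed. The only genuine input in the whole argument is the Pfister form identity needed for~\ref{rel1}, and this has already been done in the paper, so no real obstacle remains.
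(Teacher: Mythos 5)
Your proposal is correct and takes essentially the same route as the paper: both verify the four Milnor--Witt relations for the assignments $\eta \mapsto -1$ and $[a] \mapsto \lAngle a \rAngle$ by direct computation in $\mathrm{W}(F)$ (the Steinberg case reducing to the Pfister-form identity already appearing in Lemma~\ref{def_psi}), observe that $h \mapsto 0$ so the map descends to $\KW_*(F)$, and check the $\mathrm{W}(F)$-algebra compatibility via $\langle a \rangle = 1 + \eta[a]$. Your explicit computation $\tilde\theta(h) = (-1)\lAngle -1 \rAngle + 2 = 0$ is, if anything, stated more cleanly than the corresponding line in the paper.
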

\begin{proof}
	In the ring $\mathrm{W}(F)$, one can compute that \begin{align*}
		(1 + \langle -a \rangle ) + (1 + \langle -b \rangle) - (1 + \langle a \rangle)(1 + \langle b \rangle) & = 1 - \langle a b \rangle
	\end{align*}
	which shows that $\lAngle ab \rAngle = \lAngle a \rAngle + \lAngle b \rAngle - \lAngle a \rAngle \lAngle b \rAngle$ in $\mathrm{I}^*(F)$. Similarly, one sees that \begin{align*}
		(1 - \langle a \rAngle)(1 - \langle 1 - a \rangle) & = 1 - \langle a\rangle - \langle 1 - a \rangle + \langle a(1-a) \rangle \\
		                                                   & = 1 - (1 + \langle a(1-a) \rangle) + \langle a (1-a) \rangle            \\
		                                                   & = 0
	\end{align*}
	the second equality being axiom~\ref{GW3}.
	Similarly, one checks that $-\lAngle -1 \rAngle = -h$, which is $0$ in $\mathrm{W}(F)$.
	This shows that $a \mapsto \lAngle a \rAngle \in \mathrm{I}^1(F)$ and $\eta \mapsto -1 \in \mathrm{I}^{-1}(F) = \mathrm{W}(F)$ extends to a map of graded ring as desired.
	Compatibility with the $\mathrm{W}(F)$-algebras structure then comes from the fact that $\langle a \rangle \cdot 1$ is sent to the element $1 - \lAngle a \rAngle = \langle a \rangle$ of $\mathrm{W}(F)$ as $\langle a \rangle = 1 + \eta \lBrack a \rBrack$ in $KW_*(F)$.
\end{proof}

\begin{proposition}\label{GW_relations_in_KW}
	Let $a, b \in F^\times$. \begin{enumerate}[label= (\roman*)]
		\item One has $\lBrack a^2 b \rBrack = \lBrack b \rBrack$ in $\KW_*(F)$.
		\item One has $\lBrack a \rBrack + \lBrack -a \rBrack = \lBrack 1 \rBrack + \lBrack -1 \rBrack$ in $\KW_*(F)$.
		\item If $a + b \in F^\times$, then $\lBrack a \rBrack + \lBrack b \rBrack = \lBrack a+b \rBrack + \lBrack ab(a+b) \rBrack$ in $\KW_*(F)$.
	\end{enumerate}
\end{proposition}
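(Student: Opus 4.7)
My plan is to derive the three identities as the analogues in $\KW_*(F)$ of the relations~\ref{GW1}--\ref{GW3} defining the Grothendieck-Witt ring, each obtained by a short manipulation of the identities established in Lemmas~\ref{lemma_1} and~\ref{lemma_2} together with the vanishing of $\lBrack a^2 \rBrack$ from Lemma~\ref{square_in_brack}.

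For (i), part~\ref{lemma_1_1} of Lemma~\ref{lemma_1} gives $[a^2 b] = [a^2] + \langle a^2 \rangle [b]$, and part~\ref{lemma_2_3} of Lemma~\ref{lemma_2} gives $\langle a^2 \rangle = 1$, so passing to $\KW_*(F)$ and applying Lemma~\ref{square_in_brack} yields $\lBrack a^2 b \rBrack = \lBrack a^2 \rBrack + \lBrack b \rBrack = \lBrack b \rBrack$. For (ii), part~\ref{lemma_1_1} of Lemma~\ref{lemma_1} yields $[-a] = [-1] + \langle -1 \rangle [a]$, hence $[a] + [-a] = h[a] + [-1]$ in $\KMW_*(F)$; passing to $\KW_*(F)$ and using $\lBrack 1 \rBrack = 0$ gives the asserted equality (both sides actually vanish in characteristic $2$, but this presentation emphasises the parallel with~\ref{GW2}).

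The substance is in (iii), where the target is of degree $1$ while the natural input --- the Steinberg identity applied to $u = a/(a+b)$ and $1-u = b/(a+b)$ --- lives in degree $2$. The key idea is to multiply $[a/(a+b)][b/(a+b)] = 0$ by $\eta$: this drops the degree by one and, after rewriting $\eta[x]$ as $\langle x \rangle - 1$ and then applying $\langle x \rangle [y] = [xy] - [x]$ from part~\ref{lemma_1_1} of Lemma~\ref{lemma_1}, it produces the identity
\[
[ab/(a+b)^2] = [a/(a+b)] + [b/(a+b)]
\]
in $\KMW_1(F)$. Multiplying through by $\langle a+b \rangle$ and applying the same rewriting $\langle a+b \rangle[x] = [(a+b)x] - [a+b]$ term by term collapses the three fractions, yielding $[ab/(a+b)] - [a+b] = [a] + [b] - 2[a+b]$, and hence $[a] + [b] = [a+b] + [ab/(a+b)]$ in $\KMW_1(F)$. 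To conclude, pass to $\KW_*(F)$ and absorb the square: since $ab(a+b) = (ab/(a+b))(a+b)^2$, part~(i) gives $\lBrack ab/(a+b) \rBrack = \lBrack ab(a+b) \rBrack$, and the target relation follows. The only nonobvious step is the initial multiplication by $\eta$; everything else is mechanical rewriting using Lemma~\ref{lemma_1}.
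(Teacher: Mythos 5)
Your proof is correct and follows essentially the same route as the paper: each item reduces to the Steinberg relation applied to $a/(a+b)$ and $b/(a+b)$ together with multiplicativity and the vanishing of $\lBrack a^2 \rBrack$, with only cosmetic differences in how the denominators are cleared (you multiply the identity by $\langle a+b\rangle$ and collapse, where the paper invokes part~\ref{lemma_1_5} of Lemma~\ref{lemma_1} and the Grothendieck--Witt relation in $\KMW_0(F)$). The only minor divergence is in (ii), where you give a characteristic-free argument via $[a]+[-a]=h[a]+[-1]$ instead of the paper's observation that both sides simply vanish in characteristic $2$.
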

\begin{proof}
	Notice that the second point is trivial, as the left hand side is $2\lBrack a \rBrack$, which is zero as $2 = 0$ in $\KW_*(F)$, and the right hand side is also zero by Lemma~\ref{simpl_1}.

	For the first point, one has \begin{align*}
		\left\lBrack a^2b \right\rBrack = \left\lBrack a^2 \right\rBrack + \lBrack b \rBrack + \eta \left\lBrack a^2 \right\rBrack \lBrack b \rBrack
	\end{align*}
	and the claim exactly follows from Lemma~\ref{square_in_brack}.

	For the last point, first, setting $\alpha = \frac{a}{a + b}$, one sees that $1 - \alpha = \frac{b}{a + b}$, so that $\alpha(1-\alpha) = \frac{ab}{{(a+b)}^2}$, which differs multiplicatively from $ab$ by a square. Hence by the first point,
	\[\lBrack \alpha(1-\alpha) \rBrack = \lBrack ab \rBrack.\]
	On the other hand \[\lBrack \alpha(1-\alpha)\rBrack = \lBrack \alpha \rBrack + \lBrack 1 - \alpha \rBrack + \eta \lBrack \alpha \rBrack \lBrack 1 - \alpha\rBrack.\]
	The last summand of the right hand side is zero by the Steinberg relation, so that
	\[\lBrack ab \rBrack = \left\lBrack \frac{a}{a+b} \right\rBrack + \left\lBrack \frac{b}{a+b} \right\rBrack.\]
	Since one has
	\[ \left\lBrack \frac{a}{a+b} \right\rBrack = \lBrack a \rBrack - \left \langle \frac{a}{a+b} \right \rangle \lBrack a + b \rBrack\]
	and
	\[ \left \lBrack \frac{b}{a+b} \right \rBrack = \lBrack b \rBrack - \left \langle \frac{b}{a+b} \right \rangle \lBrack a+b \rBrack\]
	by~\ref{lemma_1_5} of Lemma~\ref{lemma_1}, the above expression of $\lBrack ab \rBrack$ yields the equality \[
		\lBrack ab \rBrack = \lBrack a \rBrack + \lBrack b \rBrack - (\langle \alpha \rangle + \langle 1 - \alpha \rangle) \lBrack a + b \rBrack
	\]
	As $x \mapsto \langle x \rangle$ extends to a morphism $\mathrm{GW}(F) \to \KW_*(F)$ (as it already extends to a morphism $\mathrm{GW}(F) \to \KMW_*(F)$), one gets $\langle \alpha \rangle + \langle 1 - \alpha \rangle = \langle 1 \rangle + \langle \alpha(1 - \alpha) \rangle$, which is also $ 1 + \langle ab \rangle$ as $\alpha(1-\alpha)$ and $ab$ differ multiplicatively by a square so that by reorganizing the terms of the above equality, it becomes
	\[\lBrack a \rBrack + \lBrack b \rBrack = \lBrack a + b \rBrack + \lBrack ab \rBrack + \langle ab \rangle \lBrack a + b \rBrack\]
	and the last two summands sum to \[ \lBrack ab(a + b) \rBrack\] by point~\ref{lemma_1_1} of Lemma~\ref{lemma_1}, which gives exactly the expected equality.
\end{proof}
\begin{proposition}
	There is a morphism \[\mathrm{I}(F) \to \KW_*(F)\] that sends the class of a Pfister form $\lAngle a \rAngle$ to $\lBrack a \rBrack$.
\end{proposition}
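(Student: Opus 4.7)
The plan is three-step. First, I would use Proposition~\ref{GW_relations_in_KW} together with the presentation of $\GW(F)$ given in Theorem~\ref{GW_relations} to define an abelian group homomorphism $\phi\colon \GW(F) \to \KW_1(F)$ by $\langle a \rangle \mapsto \lBrack a \rBrack$. The three parts of that proposition are exactly the instances of relations~\ref{GW1}, \ref{GW2}, \ref{GW3} required for well-definedness; the unit $\langle 1 \rangle$ of $\GW(F)$ maps to $\lBrack 1 \rBrack = 0$, which is consistent since $[1]=0$ in $\KMW_\ast(F)$ by Lemma~\ref{lemma_1}.

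Second, I would show that $\phi$ factors through $\mathrm{W}(F) = \GW(F)/(h)$. The ideal $(h)$ is additively generated by the elements $h\langle a \rangle = \langle a \rangle + \langle -a \rangle$, and part (ii) of Proposition~\ref{GW_relations_in_KW} combined with $\lBrack -1 \rBrack = \lBrack 1 \rBrack = 0$ in characteristic $2$ gives
\[
\phi\bigl(h\langle a\rangle\bigr) = \lBrack a \rBrack + \lBrack -a \rBrack = \lBrack 1 \rBrack + \lBrack -1 \rBrack = 0,
\]
so $\phi$ descends to $\bar\phi\colon \mathrm{W}(F) \to \KW_1(F)$. Restricting to $\mathrm{I}(F)$ and evaluating on a Pfister form yields
\[
\bar\phi\bigl(\lAngle a \rAngle\bigr) = \bar\phi\bigl(\langle 1 \rangle + \langle -a \rangle\bigr) = \lBrack 1 \rBrack + \lBrack -a \rBrack = \lBrack a \rBrack,
\]
using $\lBrack -a \rBrack = \lBrack a \rBrack$ in characteristic $2$ one final time. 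This is the required morphism.

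There is no genuine obstacle: all of the substantive work has already been packaged into Proposition~\ref{GW_relations_in_KW}, and the characteristic $2$ hypothesis intervenes only to collapse signs like $\lBrack -a \rBrack = \lBrack a \rBrack$ and to ensure that the hyperbolic element $h$ acts trivially on the target.
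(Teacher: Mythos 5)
Your proof is correct and follows essentially the same route as the paper: extend $a \mapsto \lBrack a \rBrack$ to a homomorphism on $\GW(F)$ via Proposition~\ref{GW_relations_in_KW}, descend to $\mathrm{W}(F)$ because $h$ dies in $\KW_*(F)$, and then produce the map on $\mathrm{I}(F)$. The only, immaterial, difference is the last step: you restrict along the inclusion $\mathrm{I}(F) \subset \mathrm{W}(F)$, whereas the paper uses the characteristic-$2$ splitting of the rank-mod-$2$ sequence to identify $\mathrm{I}(F)$ with the quotient $\mathrm{W}(F)/\langle 1 \rangle$ and factors through that; since $\lBrack 1 \rBrack = 0$, both descriptions yield the same homomorphism sending $\lAngle a \rAngle$ to $\lBrack a \rBrack$.
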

\begin{proof}
	Proposition~\ref{GW_relations_in_KW} tells us that $a \mapsto \lBrack a \rBrack$ extends to a morphism $\mathrm{GW}(F) \to \KW_*(F)$. Furthermore, as $h = 0$ in $\KW_*(F)$, this map factors as a map $f: \mathrm{W}(F) \to \KW_*(F)$.
	There is a short exact sequence of abelian groups

	\[ 0 \xrightarrow{} \mathrm{I}(F) \xrightarrow{} \mathrm{W}(F) \xrightarrow{\mathrm{rk}\ \mathrm{mod}\ 2} \mathbb{Z}/2\mathbb{Z} \xrightarrow{} 0\]

	As $F$ is of characteristic $2$, the element $h$ of $\mathrm{GW}(F)$ becomes $2$, so that the unit of the ring structure of $\mathrm{W}(F)$ factors as a map $\mathbb{Z}/2\mathbb{Z} \to \mathrm{W}(F)$ that splits the previous exact sequence.
	Thus, the subgroup $\mathrm{I}(F)$ identifies as a quotient of $\mathrm{W}(F)$, namely the quotient by the subgroup spanned by $\langle 1 \rangle$. As $\lBrack 1 \rBrack = 0$ in $\KW_*(F)$, the map $f: \mathrm{W}(F) \to \KW_*(F)$ factors through $\mathrm{I}(F)$ in a way that, by definition, the class of the Pfister form $\lAngle a \rAngle = 1 + \langle a \rangle$ is mapped to $\lBrack a \rBrack$.
\end{proof}
\begin{corollary}
	There is an isomorphism $\KW_1(F) \simeq \mathrm{I}(F)$.
\end{corollary}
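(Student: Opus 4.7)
The plan is simply to recognize that we already have candidate maps in both directions and that they are inverse on generators. From Proposition~\ref{KW_to_I}, the degree-$1$ component of $\theta$ gives a homomorphism $\theta_1 \colon \KW_1(F) \to \mathrm{I}(F)$ sending $\lBrack a \rBrack$ to $\lAngle a \rAngle$. The preceding proposition provides a homomorphism $\varphi \colon \mathrm{I}(F) \to \KW_1(F)$ with $\varphi(\lAngle a \rAngle) = \lBrack a \rBrack$. Thus on the distinguished symbols both compositions $\theta_1 \circ \varphi$ and $\varphi \circ \theta_1$ are tautologically the identity.

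It then remains to verify that these symbols additively generate the respective groups. For $\KW_1(F)$, I would invoke Lemma~\ref{lemma_generators}(1), according to which $\KMW_1(F)$ is generated by the symbols $[u]$ with $u \in F^\times$, so that after quotienting by $h$ the group $\KW_1(F)$ is generated by the classes $\lBrack u \rBrack$. For $\mathrm{I}(F)$, I would cite the standard fact already used in the proof of Lemma~\ref{psi_surj}, namely that $\mathrm{I}(F)$ is additively generated by classes $\langle a \rangle - 1$; in characteristic $2$ these classes coincide with the Pfister forms $\lAngle a \rAngle = 1 + \langle a \rangle$ (signs being irrelevant by Lemma~\ref{simpl_1}).

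Since both $\theta_1$ and $\varphi$ are morphisms of abelian groups, and their compositions agree with the identity on a generating set, they are mutually inverse isomorphisms. There is no real obstacle to the argument: the genuine work — checking the Milnor–Witt relations to build $\theta$ and checking the Grothendieck–Witt relations to build $\varphi$ — has already been performed in Proposition~\ref{KW_to_I} and Proposition~\ref{GW_relations_in_KW}, and this corollary is the payoff.
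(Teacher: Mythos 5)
Your argument is correct and follows essentially the same route as the paper: the paper likewise deduces surjectivity of $\mathrm{I}(F) \to \KW_1(F)$ from Lemma~\ref{lemma_generators} and injectivity from the fact that $\theta_1$ is a one-sided inverse, which is exactly your check that the compositions are the identity on the generating symbols $\lBrack u \rBrack$ and $\lAngle a \rAngle$. The only cosmetic quibble is that the irrelevance of signs in $\mathrm{W}(F)$ comes from $h = 2 = 0$ in $\mathrm{W}(F)$ rather than from Lemma~\ref{simpl_1}, which concerns $\KMW_*(F)$.
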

\begin{proof}
	The morphism $\mathrm{I}(F) \to \KW_1(F)$ defined in the last proposition is surjective as $\KW_1(F)$ is spanned linearly by the elements $\lBrack u \rBrack$ for $u \in F^\times$ according to Lemma~\ref{lemma_generators}.
	Essentially by construction, the degree $1$ part of the morphism $\theta$ from Proposition~\ref{KW_to_I} is a right inverse to this morphism and it is thus injective.
\end{proof}
We now engage the proof that $\theta$ is an isomorphism in all degrees. Our argument will rely on the two following lemmas.
\begin{lemma}
	Let $F$ be a field of characteristic $2$ and $a, b, c \in F^\times$. \begin{enumerate}[label= (\roman*)]
		\item One has $\lBrack a \rBrack \lBrack b \rBrack = \lBrack a \rBrack \lBrack ab \rBrack$ in $\KW_*(F)$.

		\item If there exists $u, v \in F$ such that $c = u^2 + v^2a$, then
		      \[
			      \lBrack a \rBrack \lBrack b \rBrack = \lBrack a \rBrack \lBrack bc \rBrack
		      \] in $\KW_*(F)$.

		\item If there exists $u, v \in F$ such that $c = u^2a + v^2 b$, then \[
			      \lBrack a \rBrack \lBrack b \rBrack = \lBrack ab \rBrack \lBrack c \rBrack
		      \] in $\KW_*(F)$.
	\end{enumerate}
\end{lemma}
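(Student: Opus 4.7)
The plan is to prove the three items in order, with (i) a direct computation, (ii) the heart of the argument relying on the Steinberg relation in its characteristic~$2$ guise, and (iii) following from (ii) by rescaling.

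For (i), I expand $\lBrack ab \rBrack$ using relation~\ref{rel2} reduced modulo $h$, so that $\lBrack ab \rBrack = \lBrack a \rBrack + \lBrack b \rBrack + \eta \lBrack a \rBrack \lBrack b \rBrack$ in $\KW_*(F)$, then multiply on the left by $\lBrack a \rBrack$. Two of the three resulting summands contain a factor of $\lBrack a \rBrack^2$, which vanishes by Lemma~\ref{simpl_1}, leaving only $\lBrack a \rBrack \lBrack b \rBrack$; comparing with $\lBrack a \rBrack \lBrack ab \rBrack$ gives the identity.

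For (ii), the same expansion applied to $\lBrack bc \rBrack$ and multiplied by $\lBrack a \rBrack$ reduces the claim to showing $\lBrack a \rBrack \lBrack c \rBrack = 0$ (the $\eta$-term is automatically zero once that identity holds, by commutativity of $\KW_*(F)$). The edge cases $u = 0$ and $v = 0$ follow at once from Proposition~\ref{GW_relations_in_KW}(i) and Lemma~\ref{square_in_brack}, so I focus on the generic case. Writing $c = u^2 (1 + d)$ with $d = (v/u)^2 a$, Proposition~\ref{GW_relations_in_KW}(i) gives both $\lBrack c \rBrack = \lBrack 1 + d \rBrack$ and $\lBrack a \rBrack = \lBrack d \rBrack$ because each pair differs by a square. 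The crucial observation is that in characteristic~$2$ the element $1 - d \in F$ coincides with $1 + d$, so the Steinberg relation~\ref{rel1} becomes $\lBrack d \rBrack \lBrack 1 + d \rBrack = 0$ in $\KW_*(F)$; the subcase $d = 1$ forces $a$ to be a square, in which case $\lBrack a \rBrack = 0$ by Lemma~\ref{square_in_brack} and the identity is trivial. I expect this to be the main obstacle: recognizing that the collapse $1 - d = 1 + d$ in characteristic~$2$ is precisely what allows Steinberg to be applied to an \emph{additive} decomposition $c = u^2 + v^2 a$.

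For (iii), the hypothesis rearranges as $c/b = v^2 + u^2 (a/b)$, which matches the shape of (ii) with $(a/b, c/b)$ in place of $(a, c)$. Applying (ii) with auxiliary element equal to $b$ gives $\lBrack a/b \rBrack \lBrack b \rBrack = \lBrack a/b \rBrack \lBrack c \rBrack$. The decisive observation is that $ab = b^2 \cdot (a/b)$, so by Proposition~\ref{GW_relations_in_KW}(i) one has $\lBrack a/b \rBrack = \lBrack ab \rBrack$ in $\KW_*(F)$; substituting turns the identity into $\lBrack ab \rBrack \lBrack b \rBrack = \lBrack ab \rBrack \lBrack c \rBrack$. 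Finally, (i) together with commutativity of $\KW_*(F)$ simplify the left-hand side, since $\lBrack ab \rBrack \lBrack b \rBrack = \lBrack b \rBrack \lBrack ab \rBrack = \lBrack b \rBrack \lBrack a \rBrack = \lBrack a \rBrack \lBrack b \rBrack$, which completes the proof.
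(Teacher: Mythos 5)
Your proof is correct. Items (i) and (ii) follow the paper's argument essentially verbatim: expand the product symbol via relation~\ref{rel2}, kill the summands containing $\lBrack a \rBrack^2$ using Lemma~\ref{simpl_1}, and for (ii) reduce to $\lBrack a \rBrack\lBrack c \rBrack = 0$ via the rescaling $c = u^2\left(1 + (v/u)^2a\right)$ and the Steinberg relation read in characteristic $2$ as $\lBrack d \rBrack\lBrack 1+d \rBrack = 0$. Your explicit treatment of the degenerate case $d = 1$ is a small point the paper leaves implicit; note that it is in fact vacuous, since $d = 1$ would force $c = u^2 + v^2a = 2u^2 = 0$, contradicting $c \in F^\times$. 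For (iii) you diverge slightly: the paper redoes a direct Steinberg computation from the factorization $c = u^2a\left(1 + (v/(ua))^2ab\right)$, whereas you deduce (iii) formally from (ii) by dividing through by $b$, using $\lBrack a/b \rBrack = \lBrack ab \rBrack$ and then concluding with (i) and commutativity. Your route is a little cleaner: it avoids repeating the Steinberg argument, and because (ii) already covers the edge cases $u = 0$ and $v = 0$, it silently handles the case $u = 0$ of (iii), which the paper's computation (which divides by $ua$) does not address separately.
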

\begin{proof}
	For the first point, compute that \begin{align*}
		\lBrack a \rBrack \lBrack ab \rBrack & = \lBrack a \rBrack \left(\lBrack a \rBrack + \lBrack b \rBrack + \eta \lBrack a \rBrack \lBrack b \rBrack \right) \\
		                                     & = \lBrack a \rBrack^2 + \lBrack a \rBrack \lBrack b \rBrack + \eta \lBrack a \rBrack^2 \lBrack b \rBrack           \\
		                                     & = \lBrack a \rBrack \lBrack b \rBrack,
	\end{align*}
	the last equality following from the fact that $\lBrack a \rBrack^2 = 0$ by Lemma~\ref{simpl_1}.

	For the second point, in the case $u = 0$, then as $\lBrack a \rBrack = \left\lBrack a v^2 \right\rBrack$ by Proposition~\ref{GW_relations_in_KW}, we are done by the previous computation. Assume now that $u \neq 0$.
	One has, \begin{align*}
		\lBrack a \rBrack \lBrack b c \rBrack & = \lBrack a \rBrack \left\lBrack b \left(u^2 + av^2 \right) \right\rBrack                                                                                 \\
		                                      & = \lBrack a \rBrack \left(\lBrack b \rBrack + \left\lBrack u^2 + v^2a \right\rBrack + \eta \lBrack b \rBrack \left\lBrack u^2 + v^2a \right\rBrack\right)
	\end{align*}
	but using again Proposition~\ref{GW_relations_in_KW}, one sees that \begin{align*}
		\lBrack a \rBrack \left\lBrack u^2 + v^2 a \right\rBrack & = \lBrack a \rBrack \left\lBrack u^2\left(1 + {\left(\frac{v}{u}\right)}^2a\right) \right\rBrack                          \\
		                                                         & = \lBrack a \rBrack \left\lBrack 1 + {\left(\frac{v}{u}\right)}^2 a \right\rBrack                                         \\
		                                                         & = \left\lBrack {\left(\frac{v}{u}\right)}^2 a \right\rBrack \left\lBrack 1 + {\left(\frac{v}{u}\right)}^2 a \right\rBrack \\
		                                                         & = 0
	\end{align*}
	the last equality coming from the Steinberg relation and the characteristic 2 assumption. Thus \[\lBrack a \rBrack \lBrack bc \rBrack = \lBrack a \rBrack \lBrack ab \rBrack\] which is indeed $\lBrack a \rBrack \lBrack b \rBrack$ by the first point.

	Finally, assume $c = u^2a + v^2b$. Then as $c = u^2a\left(1 + {\left(\frac{v}{ua}\right)}^2 ab\right)$, one gets \begin{align*}
		\lBrack c \rBrack \lBrack ab \rBrack & = \left( \left\lBrack u^2 a \right\rBrack + \left \lBrack \left(1 + {\left(\frac{v}{ua}\right)}^2ab\right) \right \rBrack \right)\lBrack ab \rBrack     \\
		                                     & = \left\lBrack u^2 a \right\rBrack \lBrack ab \rBrack+ \left \lBrack \left(1 + {\left(\frac{v}{ua}\right)}^2ab\right) \right \rBrack \lBrack ab \rBrack \\
	\end{align*}
	Using again Proposition~\ref{GW_relations_in_KW}, characteristic $2$ and the Steinberg relation, the second summand of the right hand side is zero while the first summand is $\lBrack a \rBrack \lBrack ab \rBrack$, so we are done using the first point.
\end{proof}
\begin{lemma}\label{switch}
	Let $a_1,\ldots,a_n$ be elements of $F^\times$ and let $b_1$ be an element in $F^\times$. Assume $b_1$ is a norm for the pure subspace of the inner product space
	\[\bigotimes\limits_{i=1}^n \left( \langle 1 \rangle \bot \left\langle a_i \right\rangle\right),\]
	then there exists $b_2,\ldots, b_n$ such that \[
		\prod\limits_{i=1}^n\left\lBrack a_i \right\rBrack = \prod\limits_{i=1}^n \left\lBrack b_i \right\rBrack
	\]
	in $\KW_*(F)$.
\end{lemma}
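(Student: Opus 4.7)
The plan is to argue by induction on $n$, proving simultaneously the following auxiliary ``shift'' identity: for every $n \geq 1$, every $a_1, \ldots, a_n \in F^\times$, and every nonzero value $v$ of the Pfister form $\pi_{n-1} \colonequals \langle\langle a_1, \ldots, a_{n-1}\rangle\rangle$, one has $\lBrack a_1 \rBrack \cdots \lBrack a_{n-1} \rBrack \lBrack a_n \rBrack = \lBrack a_1 \rBrack \cdots \lBrack a_{n-1} \rBrack \lBrack a_n v \rBrack$ in $\KW_*(F)$. The base case $n = 1$ of the switch lemma is immediate: the pure subspace of $\langle 1, a_1\rangle$ is $\langle a_1\rangle$, so $b_1 = a_1 t^2$ and $\lBrack b_1 \rBrack = \lBrack a_1 \rBrack$ by Proposition~\ref{GW_relations_in_KW}(i).

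For the inductive step of the shift identity, I would use the orthogonal decomposition $\pi_{n-1} \cong \pi_{n-2} \perp a_{n-1}\pi_{n-2}$ to write $v = p + a_{n-1}q$ with $p, q \in D(\pi_{n-2}) \cup \{0\}$. If $q = 0$, apply the inductive shift identity at level $n-1$ to the sequence $(a_1, \ldots, a_{n-2}, a_n)$ with shift $p$, then use commutativity of $\KW_*(F)$ in characteristic $2$ to reinsert $\lBrack a_{n-1}\rBrack$. If $p = 0$, the analogous application with shift $q$ works after invoking part~(i) of the preceding lemma to turn $\lBrack a_{n-1}\rBrack \lBrack a_n q\rBrack$ into $\lBrack a_{n-1}\rBrack \lBrack a_{n-1} a_n q\rBrack = \lBrack a_{n-1}\rBrack \lBrack a_n v\rBrack$. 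If both $p$ and $q$ are nonzero, the case $q = 0$ first reduces the problem to $v = 1 + a_{n-1}r$, where $r = q/p \in D^\times(\pi_{n-2})$ (using that $D^\times(\pi_{n-2})$ is a group, which follows from Pfister roundness). After expanding $\lBrack a_n(1 + a_{n-1}r)\rBrack$ via~\ref{rel2} and using that $\langle a_n\rangle$ is a unit, it then suffices to prove $\lBrack a_1\rBrack \cdots \lBrack a_{n-1}\rBrack \lBrack 1 + a_{n-1}r\rBrack = 0$. For this, apply the inductive shift identity at level $n-1$ with shift $r$ to rewrite the first $n-1$ factors as $\lBrack a_1\rBrack \cdots \lBrack a_{n-2}\rBrack \lBrack a_{n-1}r\rBrack$; the Steinberg relation then yields $\lBrack a_{n-1}r\rBrack \lBrack 1 + a_{n-1}r\rBrack = 0$ (using $-1 = 1$ in characteristic 2), closing the argument.

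For the inductive step of the switch lemma itself, decompose the pure subspace of $\pi_n$ as $\pi_{n-1}^{\mathrm{pure}} \perp a_n \pi_{n-1}$ and write $b_1 = u + a_n v$ with $u \in D(\pi_{n-1}^{\mathrm{pure}}) \cup \{0\}$ and $v \in D(\pi_{n-1}) \cup \{0\}$. If $v = 0$, apply the inductive switch lemma to the pure value $u = b_1$ of $\pi_{n-1}$ and append $\lBrack a_n\rBrack$ as the last factor. If $u = 0$, the shift identity gives $\prod_i \lBrack a_i\rBrack = \lBrack a_1\rBrack \cdots \lBrack a_{n-1}\rBrack \lBrack a_n v\rBrack = \lBrack a_1\rBrack \cdots \lBrack a_{n-1}\rBrack \lBrack b_1\rBrack$, which commutes to $\lBrack b_1\rBrack \lBrack a_1\rBrack \cdots \lBrack a_{n-1}\rBrack$. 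If both $u$ and $v$ are nonzero, combine both strategies: the shift identity turns $\prod_i \lBrack a_i\rBrack$ into $\lBrack a_1\rBrack \cdots \lBrack a_{n-1}\rBrack \lBrack a_n v\rBrack$, the inductive switch lemma factors $\lBrack a_1\rBrack \cdots \lBrack a_{n-1}\rBrack = \lBrack u\rBrack \lBrack c_2\rBrack \cdots \lBrack c_{n-1}\rBrack$ for suitable $c_i$, and finally part~(iii) of the preceding lemma, applied to $b_1 = 1^2 \cdot u + 1^2 \cdot (a_n v)$, transforms $\lBrack u\rBrack \lBrack a_n v\rBrack$ into $\lBrack u \cdot a_n v\rBrack \lBrack b_1\rBrack$, producing the required factorization $\lBrack b_1\rBrack \lBrack u \cdot a_n v\rBrack \lBrack c_2\rBrack \cdots \lBrack c_{n-1}\rBrack$.

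The crux is the ``both nonzero'' case of the auxiliary shift identity: this is the only place where Pfister roundness (to identify $r = q/p$ as a value of $\pi_{n-2}$) and the characteristic 2 hypothesis (for the Steinberg relation with $-1 = 1$) both enter in an essential way. The remaining cases, while numerous, amount to careful bookkeeping via commutativity of $\KW_*(F)$ and repeated applications of part~(i) of the preceding lemma.
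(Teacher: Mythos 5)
Your proof is correct and follows essentially the same route as the paper, which simply defers to Lemma~4.1.4 of~\cite{scharlau_quadratic_1985}: an induction on $n$ through the decomposition of the pure part of the Pfister form, combined with roundness and the three identities of the preceding lemma is exactly Scharlau's argument transported to $\KW_*(F)$. You have merely written out in full the details the paper leaves to the reference, and the bookkeeping (including the crucial ``both nonzero'' case via the Steinberg relation with $-1=1$) checks out.
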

\begin{proof}
	Given the previous lemma, the proof is exactly the same as in~\cite[Lemm.~4.1.4]{scharlau_quadratic_1985}, so we refer the reader to there.
\end{proof}
The last piece we need is the following theorem, due to Kato~\cite{kato_symmetric_1982}
\begin{theoremnoproof}
	Let $F$ be a field of characteristic $2$. The Milnor map \[s_* : \KM_*(F)/2\KM_*(F) \to \overline{\mathrm{I}^*}(F) \colonequals \bigoplus\limits_{n \geq 0} \mathrm{I}^n(F)/\mathrm{I}^{n+1}(F)\] that sends the symbol $\{a\}$ to the class of $\lAngle a \rAngle$ is an isomorphism.
\end{theoremnoproof}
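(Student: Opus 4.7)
The plan is to construct $s_*$ as a well-defined surjective morphism of graded rings and to reduce injectivity to the injectivity of the dlog map from Milnor $K$-theory to Kähler differentials. First I would verify that the assignment $\{a_1,\ldots,a_n\} \mapsto \lAngle a_1 \rAngle \cdots \lAngle a_n \rAngle \bmod \mathrm{I}^{n+1}(F)$ respects the defining relations of $\KM_n(F)/2\KM_n(F)$: multilinearity modulo $\mathrm{I}^{n+1}(F)$ is immediate from the identity $\lAngle ab \rAngle = \lAngle a \rAngle + \lAngle b \rAngle - \lAngle a \rAngle \lAngle b \rAngle$ already computed in the proof of Proposition~\ref{KW_to_I}; the Steinberg relation $\lAngle a \rAngle \lAngle 1-a \rAngle = 0$ in fact holds in $\mathrm{W}(F)$ itself, also by that same computation; and the factorization through the quotient by $2\KM_*$ is automatic, since $\mathrm{W}(F)$ is an $\mathbb{F}_2$-algebra in characteristic $2$.

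Surjectivity is the easy half. The ideal $\mathrm{I}(F)$ is additively generated by the $1$-fold Pfister forms $\lAngle a \rAngle$ with $a \in F^\times$, so $\mathrm{I}^n(F)$ is additively generated by $n$-fold products $\lAngle a_1 \rAngle \cdots \lAngle a_n \rAngle$, and these tautologically lie in the image of $s_n$.

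The hard part, where the entire content of Kato's theorem is concentrated, is injectivity. The strategy I would follow is to factor $s_n$ through the module of absolute Kähler differentials $\Omega^n_{F}$ over the prime field $\mathbb{F}_2$. One first constructs the dlog morphism $d\colon \KM_n(F)/2\KM_n(F) \to \Omega^n_F$ sending $\{a_1,\ldots,a_n\}$ to $\tfrac{da_1}{a_1} \wedge \cdots \wedge \tfrac{da_n}{a_n}$, which is well-defined because in characteristic $2$ one has $\tfrac{da}{a} \wedge \tfrac{d(1-a)}{1-a} = -\tfrac{da \wedge da}{a(1-a)} = 0$. In parallel, one produces a map $\partial\colon \mathrm{I}^n(F)/\mathrm{I}^{n+1}(F) \to \Omega^n_F$ sending an $n$-fold Pfister class to the same logarithmic $n$-form, and one checks it is a well-defined homomorphism using a presentation of $\mathrm{I}^n/\mathrm{I}^{n+1}$ by generators and relations in the spirit of Arason-Baeza. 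The triangle $\KM_n/2 \xrightarrow{s_n} \mathrm{I}^n/\mathrm{I}^{n+1} \xrightarrow{\partial} \Omega^n_F$ then commutes with $d$ by construction, so injectivity of $s_n$ follows from injectivity of $d$. This last step is the main obstacle: it identifies the image of $d$ with the subspace of logarithmic differential forms and, in characteristic $2$, can be handled by a direct manipulation of the Cartier operator which is considerably more elementary than the general mod-$p$ analogue proved by Bloch-Kato-Gabber, but remains the crux of the argument.
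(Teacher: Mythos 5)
First, note that the paper does not prove this statement at all: it is quoted from Kato~\cite{kato_symmetric_1982} and deliberately left as a black box, so there is no internal argument to compare yours against. Your outline does follow the broad strategy of Kato's actual proof --- compare both sides of $s_*$ with logarithmic differential forms --- and the routine parts are handled correctly: well-definedness of $s_*$ via $\lAngle ab \rAngle = \lAngle a \rAngle + \lAngle b \rAngle - \lAngle a \rAngle\lAngle b \rAngle$ and the vanishing of $\lAngle a \rAngle\lAngle 1-a \rAngle$ in $\mathrm{W}(F)$ (both already computed in the proof of Proposition~\ref{KW_to_I}), and surjectivity from the additive generation of $\mathrm{I}^n(F)$ by $n$-fold Pfister forms.

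The gap is in the construction of $\partial\colon \mathrm{I}^n(F)/\mathrm{I}^{n+1}(F) \to \Omega^n_F$. You justify its well-definedness by appealing to a presentation of $\mathrm{I}^n/\mathrm{I}^{n+1}$ ``in the spirit of Arason--Baeza'', but the presentations of~\cite{arason_baeza_2007} are themselves established \emph{using} Kato's isomorphism --- this is precisely the logical order alluded to at the beginning of Section 3 of the paper --- so as written the argument is circular. The difficulty is genuine: the only presentation available upstream of Kato's theorem is the additive presentation of $\GW(F)$ in Theorem~\ref{GW_relations}, which carries no multiplicative information, and the additive homomorphism $\langle a \rangle \mapsto 1 + \{a\}$ it produces annihilates $\mathrm{I}^2(F)$ outright, so it detects nothing in degrees $\geq 2$. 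Kato proves the well-definedness by hand, using the structure theory of bilinear Pfister forms in characteristic $2$, and some such argument must replace your citation for the proof to stand. Your other black box --- injectivity of the $\mathrm{dlog}$ map, i.e.\ the $p=2$ case of Bloch--Gabber--Kato --- is at least correctly identified as the crux; just be aware that it is a genuine induction over a $2$-basis of $F$ with the Artin--Schreier operator, not a short manipulation of the Cartier operator.
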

Finally we can prove the main theorem. The proof strategy is that described by Morel in~\cite{morel_puissances2004} and attributed to Arason and Elmann. Notice that perhaps unsurprisingly, one of our characteristic 2 input is the same as in~\cite{arason_baeza_2007} which also establishes presentations for $\mathrm{I}^n(F)$ in characteristic 2.
\begin{theorem}
	Let $F$ be a field of characteristic $2$. The map $\theta_* : \KW_*(F) \to \mathrm{I}^*(F)$ constructed in Proposition~\ref{KW_to_I} induces an isomorphism in positive degrees.
\end{theorem}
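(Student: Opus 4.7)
The plan is to prove the theorem by induction on $n \geq 1$, with $n = 1$ already done by the preceding corollary. Surjectivity of $\theta_n$ is immediate for all $n \geq 0$: the ideal $\mathrm{I}^n(F)$ is additively generated by $n$-fold Pfister forms $\lAngle a_1, \ldots, a_n \rAngle$, and each is the image of $\lBrack a_1 \rBrack \cdots \lBrack a_n \rBrack$ under $\theta_n$. The substantive content is therefore injectivity.

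For injectivity I would construct an explicit left inverse $\tau_n \colon \mathrm{I}^n(F) \to \KW_n(F)$. On generators this map is defined by $\lAngle a_1, \ldots, a_n \rAngle \mapsto \lBrack a_1 \rBrack \cdots \lBrack a_n \rBrack$, and the main work is showing that $\tau_n$ descends to $\mathrm{I}^n(F)$. For this I would use Arason and Baeza's presentation~\cite{arason_baeza_2007} of $\mathrm{I}^n(F)$ in characteristic $2$ by $n$-fold Pfister forms, and check that each of its defining relations holds in $\KW_n(F)$. The basic relations --- multiplicativity of each slot and absorption of squared entries --- are handled by Proposition~\ref{GW_relations_in_KW} and Lemma~\ref{square_in_brack}; the change-of-generator relations, arising from norms of pure subspaces of Pfister forms, are handled by the switch lemma (Lemma~\ref{switch}), which is tailored to exactly this purpose.

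The main obstacle will be verifying the relations that express higher-order vanishings, i.e.\ those detected only modulo $\mathrm{I}^{n+1}(F)$. Here Kato's theorem is essential: it identifies $\KM_n(F)/2\KM_n(F)$ with $\mathrm{I}^n(F)/\mathrm{I}^{n+1}(F)$, and combined with the isomorphism $\KW_n(F)/\eta\KW_{n+1}(F) \cong \KM_n(F)/2\KM_n(F)$ coming from the defining relations of $\KMW_*(F)$, this yields that an element of $\mathrm{I}^n(F)$ lies in $\mathrm{I}^{n+1}(F)$ precisely when its candidate lift to $\KW_n(F)$ lies in $\eta \KW_{n+1}(F)$. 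Any relation among Pfister forms that vanishes modulo $\mathrm{I}^{n+1}$ thus lifts to an element of $\eta\KW_{n+1}(F)$, reducing the verification to a relation of strictly lower complexity, accessible via the switch lemma and the inductive hypothesis. Once $\tau_n$ is shown well-defined, the equality $\tau_n \circ \theta_n = \mathrm{id}$ on the generating set of $\KW_n(F)$ extends to all of $\KW_n(F)$ by additivity and, combined with surjectivity, proves that $\theta_n$ is an isomorphism.
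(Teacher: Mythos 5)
Your route is genuinely different from the paper's, and it is not unreasonable in principle --- the paper itself remarks that one can argue via Arason and Baeza's presentation of $\mathrm{I}^n(F)$ in characteristic $2$, as is done in~\cite{omanovic_2015}. But as written, the proposal has a real gap at exactly the step you flag as the main obstacle. Your mechanism for handling a relation among Pfister forms that holds only modulo $\mathrm{I}^{n+1}(F)$ is: its candidate lift $x \in \KW_n(F)$ dies in $\KM_n(F)/2$, hence $x = \eta y$ for some $y \in \KW_{n+1}(F)$, and this is ``of strictly lower complexity'' and covered by the inductive hypothesis. It is not: $y$ lives in degree $n+1$, over which an ascending induction starting at $n=1$ gives you no control. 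To conclude $x = 0$ you would need injectivity of $\theta_{n+1}$ (circular), or some other reason why the iteration $x = \eta y = \eta^2 z = \cdots$ terminates. Nothing in the proposal supplies that termination, and ``complexity'' is never defined.

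The paper's proof is organized precisely around supplying it. One first reduces to fields finitely generated over $\mathbb{F}_2$, uses the finiteness of $\dim_{F^2}(F)$ together with the switch lemma (Lemma~\ref{switch}) to show that $\KW_m(F)$ and $\mathrm{I}^m(F)$ both vanish for $m$ large, and then runs a \emph{descending} induction on $n$ via the commutative ladder with exact rows $\KW_{n+1}(F) \to \KW_n(F) \to \KM_n(F)/2 \to 0$ over $\mathrm{I}^{n+1}(F) \to \mathrm{I}^n(F) \to \mathrm{I}^n(F)/\mathrm{I}^{n+1}(F) \to 0$, where the right-hand vertical map is Kato's isomorphism. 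Your diagram-level observation is essentially the induction step of that argument run in the wrong direction. To repair your version you must either import the vanishing-in-high-degrees statement (at which point you have reconstructed the paper's proof and the explicit inverse $\tau_n$ is superfluous), or genuinely invoke the content of~\cite{arason_baeza_2007}: their theorem says the relations in $\mathrm{I}^n(F)$ are generated by explicit low-degree ones, and it is that explicit list --- not a degree-shifting induction --- that must be checked in $\KW_n(F)$. Finally, your surjectivity claim silently uses that $\mathrm{I}^n(F)$ is additively generated by $n$-fold Pfister forms; this is standard but should be stated as an input rather than called immediate.
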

\begin{proof}
	It suffices to show the assertion when $F$ is finitely generated over its prime subfield $\mathbb{F}_2$.
	Indeed, any $x \in \KW_n(F)$ such that $\theta(x) = 0$ has a lift in $\mathbb{Z}[F^\times]$ by Lemma~\ref{lemma_generators}, this lifts involves only finitely many field elements.
	By definition of $\theta$, the fact that $\theta(x) = 0$ implies that this lift of $x$ in $\mathbb{Z}[F^\times]$ is an element of the subgroup spanned by the Witt relation, and one can thus write it as a sum of elements that generates the Witt relations.
	All of this only involves a finite number of elements of $F$ and thus actually happens in the free abelian group on the nonzero elements of the subfield of $F$ generated by $\mathbb{F}_2$ and these elements. Similarly for surjectivity, given any $x \in \mathrm{I}^n(F)$, surjectivity in the subfield generated by elements appearing in a lift of $x$ to $\mathbb{Z}[F^\times]$ is enough.

	Now, such a field is finitely generated over $\mathbb{F}_2$, and so the dimension of $F$ as a vector space over its subfield of squares is finite and is a power of $2$~\cite[Cor. 1 of Thm. 4, n°6 of § 1]{bourbaki_alg_V} and for such field, it is known~\cite[Thm.~III.5.10]{milnor_husemoller1973} that $\mathrm{I}^n(F)$ is zero for any $n$ such that $2^n > \dim_{F^2}(F)$.
	Let us first show that $\KW_n(F)$ is also zero if $n > {\dim_{F^2}(F)}$.
	Let $n$ be an integer satisfying the latter inequality and let $a_1,\ldots,a_n$ be elements of $F$.
	Again by Lemma~\ref{lemma_generators}, it suffices to show that the product $\prod\limits_{i=1}^n \lBrack a_i \rBrack$ is zero.
	Because of the dimension assertion, the family of all nonempty products of the $a_i$ is not linearly independent over $F^2$. This is precisely saying that the pure subspace of the Pfister inner product space
	\[\lAngle a_1,\ldots, a_n \rAngle \colonequals \bigotimes_{i = 1}^n \big(\langle 1 \rangle \bot \langle a_i \rangle\big)\]
	is isotropic, and so it represents $1$. Thus, by Lemma~\ref{switch} we are done as $\lBrack 1 \rBrack$ is zero. This at least establishes that $\theta$ is an isomorphism in high enough degrees, as both source and targets are zero. Now, assuming that $\theta_k$ is an isomorphism for $k > n$, one can prove that $\theta_n$ is an isomorphism. Indeed, consider the diagram
	\[\begin{tikzcd}
			{\KW_{n+1}(F)} & {\KW_{n}(F)} & {{\left(\KW_{*}(F)/\eta\right)}_n} & 0 \\
			{\mathrm{I}^{n+1}(F)} & {\mathrm{I}^{n}(F)} & {\mathrm{I}^{n}(F)/\mathrm{I}^{n+1}(F)} & 0
			\arrow["{\theta_n}", from=1-2, to=2-2]
			\arrow["{\theta_{n+1}}", from=1-1, to=2-1]
			\arrow[from=2-1, to=2-2]
			\arrow["{\times \eta}", from=1-1, to=1-2]
			\arrow[from=1-2, to=1-3]
			\arrow[from=2-2, to=2-3]
			\arrow["{\sigma_n}", from=1-3, to=2-3]
			\arrow[from=1-3, to=1-4]
			\arrow[from=2-3, to=2-4]
			\arrow[Rightarrow, no head, from=1-4, to=2-4]
		\end{tikzcd}\]
	which obviously commutes and which has exact rows.
	As $\KW_*(F) = \KMW_*(F)/h$, the ring $\KW_*(F)/\eta$ is also the quotient of $\KMW_*(F)/\eta$ by the class of $h$. But the quotient of $\KMW_*(F)$ by $\eta$ is none other than $\KM_*(F)$. As $h = 2$ in the characteristic $2$ setting, ${\left(\KW_*(F)/\eta\right)}_n$ is thus isomorphic to $\KM_n(F)/2$.
	Following through the definitions, it is easy to see that along this identification the vertical map $\sigma_n$ gets identified to Milnor's map $s_n$. Thus, the previous diagram gets identified with
	\[\begin{tikzcd}
			{\KW_{n+1}(F)} & {\KW_{n}(F)} & {{\left(\KM_{*}(F)/2\right)}_n} & 0 \\
			{\mathrm{I}^{n+1}(F)} & {\mathrm{I}^{n}(F)} & {\mathrm{I}^{n}(F)/\mathrm{I}^{n+1}(F)} & 0
			\arrow["{\theta_n}", from=1-2, to=2-2]
			\arrow["{\theta_{n+1}}", from=1-1, to=2-1]
			\arrow[from=2-1, to=2-2]
			\arrow["{\times \eta}", from=1-1, to=1-2]
			\arrow[from=1-2, to=1-3]
			\arrow[from=2-2, to=2-3]
			\arrow["{s_n}", from=1-3, to=2-3]
			\arrow[from=1-3, to=1-4]
			\arrow[from=2-3, to=2-4]
			\arrow[Rightarrow, no head, from=1-4, to=2-4]
		\end{tikzcd}\]
	and all of the vertical maps except \textit{a priori} $\theta_n$ are isomorphisms, but as rows of this diagram are exact, $\theta_n$ is also an isomorphism and we can conclude by descending induction.
\end{proof}

\printbibliography
\end{document}